\documentclass{amsart}
\usepackage{mathtools, microtype, mathrsfs, xfrac, hyperref, amssymb, amsthm, enumerate, xspace, stmaryrd}
\usepackage{thmtools}
\usepackage{cleveref}
\usepackage[latin1]{inputenc}
\usepackage{tikz-cd}
\usepackage{todonotes}

\numberwithin{equation}{section}

\numberwithin{subsection}{section}

\allowdisplaybreaks[1]

\newenvironment{enumeratea}
{\begin{enumerate}[\upshape (a)]}
{\end{enumerate}}

\newenvironment{enumerate1}
{\begin{enumerate}[\upshape (1)]}
{\end{enumerate}}

\newtheorem{theorem}{Theorem}[section]
\newtheorem{proposition}[theorem]{Proposition}
\newtheorem{proposition-definition}[theorem]
{Proposition-Definition}
\newtheorem{corollary}[theorem]{Corollary}
\newtheorem{lemma}[theorem]{Lemma}

\theoremstyle{definition}
\newtheorem{definition}[theorem]{Definition}

\newtheorem{example}[theorem]{Example}

\theoremstyle{remark}

\renewcommand{\mathcal}{\mathscr}

 \newcommand\cB{\mathcal{B}}

\newcommand\cG{\mathcal{G}} \newcommand\cH{\mathcal{H}}
 
 \newcommand\cL{\mathcal{L}}
\newcommand\cM{\mathcal{M}} 
\newcommand\cO{\mathcal{O}} 
 
\newcommand\cS{\mathcal{S}} 
 \newcommand\cV{\mathcal{V}}
\newcommand\cW{\mathcal{W}}

\renewcommand\AA{\mathbb{A}} 
 
 \newcommand\FF{\mathbb{F}}
\newcommand\GG{\mathbb{G}}

 \newcommand\PP{\mathbb{P}}

 \newcommand\ZZ{\mathbb{Z}}

\newcommand\rI{\mathrm{I}}

 \newcommand\rR{\mathrm{R}}
\newcommand\rS{\mathrm{S}} \newcommand\rT{\mathrm{T}}

\newcommand\rmm{\mathrm{m}}

 \newcommand\frm{\mathfrak{m}}

\newcommand\arr{\ifinner\to\else\longrightarrow\fi}

\newcommand\arrto{\ifinner\mapsto\else\longmapsto\fi}

\newcommand{\xarr}{\xrightarrow}

\renewcommand\H{\operatorname{H}}

\newcommand{\eqdef}{\mathrel{\smash{\overset{\mathrm{\scriptscriptstyle def}} =}}}

\def\displaytimes_#1{\mathrel{\mathop{\times}\limits_{#1}}}

\def\displayotimes_#1{\mathrel{\mathop{\bigotimes}\limits_{#1}}}

\renewcommand\hom{\operatorname{Hom}}

\newcommand\aut{\operatorname{Aut}}

\newcommand\spec{\operatorname{Spec}}

\newcommand\indlim{\varinjlim}

\newcommand{\underisom}{\mathop{\underline{\mathrm{Isom}}}\nolimits}

\newcommand{\underaut}{\mathop{\underline{\mathrm{Aut}}}\nolimits}

\newlength{\ignora}

\newcommand{\mmu}{\boldsymbol{\mu}}

\newcommand{\gm}{\GG_{\rmm}}

\newcommand{\GL}{\mathrm{GL}}

\DeclareFontFamily{U}{mathx}{\hyphenchar\font45}
\DeclareFontShape{U}{mathx}{m}{n}{
      <5> <6> <7> <8> <9> <10>
      <10.95> <12> <14.4> <17.28> <20.74> <24.88>
      mathx10
      }{}
\DeclareSymbolFont{mathx}{U}{mathx}{m}{n}
\DeclareFontSubstitution{U}{mathx}{m}{n}
\DeclareMathAccent{\widecheck}{0}{mathx}{"71}
\DeclareMathAccent{\wideparen}{0}{mathx}{"75}

\renewcommand{\epsilon}{\varepsilon}

\newcommand{\cha}{\operatorname{char}}

\newcommand{\aff}[1][k]{(\mathrm{Aff}/#1)}

\begin{document}

\title[Neutral representations and fields of moduli]{Neutral representations\\of finite diagonalizable group schemes\\and fields of moduli}

\author{Giulio Bresciani}
\author{Angelo Vistoli}
\author{Tianzhi Yang}

\email[Bresciani]{giulio.bresciani1@unipi.it}
\email[Vistoli]{angelo.vistoli@sns.it}
\email[Yang]{tianzhi.yang@sns.it}

\address[Bresciani]{Dipartimento di Matematica, Università di Pisa, Italy}
\address[Vistoli, Yang]{Scuola Normale Superiore, Pisa, Italy}

\thanks{The first and second authors were partially supported by research funds from Scuola Normale Superiore, project \texttt{SNS19\_B\_VISTOLI}, and by PRIN project ``Derived and underived algebraic stacks and applications''.}

\maketitle

\begin{abstract}
We introduce the notion of a \emph{neutral representation} of a finite group, or finite group scheme, $G$; a representation $V$ with the property that if a gerbe $\cG$ over a field $k$ that is a form of the classifying stack $\cB G$  admits a vector bundle that is a form of $V$, then it is neutral, that is, $\cG(k)$ is not empty. We give some criteria for a representation of a finite diagonalizable group scheme to be neutral. 

We apply this notion to give wide classes of examples of smooth curves, or varieties with a marked point, with cyclic automorphism groups, which are defined over their field of moduli, greatly generalizing some of the results in \cite{giulio-angelo-arithmetic}.
\end{abstract}

\section{Introduction} 

The motivation for this paper comes from the theory of fields of moduli. The concept of \emph{field of moduli} was introduced by Matsusaka in \cite{matsusaka-field-of-moduli}, and considerably clarified by Shimura in \cite{shimura-automorphic}. In the paper \cite{giulio-angelo-arithmetic}, inspired by \cite{debes-emsalem}, this concept is generalized and put in the general context of the theory of gerbes, in the sense of Grothendieck and Giraud \cite{giraud}. We refer to the introduction of \cite{giulio-angelo-arithmetic} for a general discussion of the history of the problem.

Let $k$ be a field, $\overline{k}$ its algebraic closure. Let $X$ be a proper variety, possibly with an additional structure, for example a polarization, or a set of marked points, with finite automorphism group over $\overline{k}$. Under some fairly weak hypotheses there exist an intermediate extension $k \subseteq \ell \subseteq \overline{k}$, finite over $k$, and a finite gerbe $\cG_{X} \arr \spec \ell$ of twisted forms of $X$, called the \emph{residual gerbe} of $X$; the field $\ell$ is the field of moduli of $X$.

The basic question is: is $X$ defined over its field of moduli? In other words, is $\cG_{X}(\ell) \neq \emptyset$, or is the gerbe $\cG_{X}$ \emph{neutral}? In \cite{giulio-angelo-arithmetic} we give a non-trivial answer, based on the concept of \emph{$R$-singularity}. If $X$ is a $d$-dimensional variety over $\overline{k}$ with a smooth marked point $x_{0} \in X$, and its group of automorphism has the $R_{d}$-property \cite[Definition~6.12]{giulio-angelo-arithmetic}, then the pair $(X, x_{0})$ is defined over its field of moduli. This generalizes several results in the literature.

In the present paper we introduce a new idea, which allows us to give many new examples of classes of varieties that are defined over their field of moduli. If $V$ is a cohomology group intrinsically associated with $X$, for example $\H^{i}(X, \cO_{X})$ for some $i \geq 0$, this becomes a representation of the group $G$ of automorphisms of $X$. Also, $V$ descends to a vector bundle $\cV \arr \cG_{X}$ over the residual gerbe. We show that there is an interesting class of examples of representations $V$ of finite groups $G$, or, more generally, of finite group schemes, with the property that if the representation descends to a vector bundle $\cV \arr \cG$ on a form $\cG$ of $\cB G$, then $\cG$ is neutral. We call these \emph{neutral representations}: see \Cref{neutral} for the exact definition.

This paper treats the case of representations of diagonalizable groups; in particular, we get results about neutral representations of finite abelian groups of order prime with the characteristic of the base field. For simplicity, this is the only case that we will consider in the introduction.

To give a flavor of our results, in this introduction we only give the simplest example of neutral representation, which is however enough to show that the theory is nontrivial.

\begin{theorem}
Let $p$ be a prime, $G$ a cyclic group of order~$p$, $V$ a finite-dimensional representation of $G$ over an algebraically closed field of characteristic different from $p$. If $\dim V - \dim V^{G}$ is not divisible by $p$, then the representation is neutral.
\end{theorem}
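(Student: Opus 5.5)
The plan is to use a Kummer-type argument: first force a suitable Brauer class to vanish over a finite extension of degree prime to $p$, then descend that vanishing back to $k$.

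\emph{Setup.} Let $\cG \arr \spec k$ be a gerbe that is a form of $\cB G$, and let $\cV \arr \cG$ be a vector bundle that is a form of $V$. Since $G$ is abelian, $\cG$ is banded by a $k$-form $A$ of $G$. This $A$ is a finite étale group scheme of order $p$, because $\cha k \neq p$. The gerbe $\cG$ is then classified by a class $\alpha \in \H^{2}(k, A)$, and $\cG$ is neutral if and only if $\alpha = 0$.

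\emph{Decomposing $\cV$.} The inertia $A$ acts on $\cV$, so $\cV$ splits into isotypic pieces. Over $\overline{k}$ these are the eigenspaces $V_{\chi}$ for the characters $\chi \in G^{\vee} \simeq \ZZ/p$. The Galois group permutes these characters through a subgroup of $(\ZZ/p)^{*}$, and the dimensions $n_{\chi} = \dim V_{\chi}$ are constant on Galois orbits. We have $\dim V - \dim V^{G} = \sum_{\chi \neq 1} n_{\chi}$. Since this sum is not divisible by $p$, some nontrivial $\chi$ has $n_{\chi}$ prime to $p$; fix such a $\chi$.

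\emph{A twisted bundle over $\ell$.} Let $\ell$ be the field of definition of $\chi$, that is, the fixed field of its Galois stabilizer. Then $[\ell : k]$ divides $p-1$, so it is prime to $p$. Over $\ell$ the character $\chi$ gives a homomorphism $A_{\ell} \arr \gm$. It is injective because $\chi$ is nontrivial of prime order, so it identifies $A_{\ell}$ with $\mmu_{p}$. The $\chi$-isotypic component $\cV_{\chi}$ is defined on $\cG_{\ell}$ and is a $\chi$-twisted vector bundle of rank $n_{\chi}$.

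\emph{Vanishing over $\ell$.} The sheaf of endomorphisms of $\cV_{\chi}$ has trivial inertia action, so it descends to an Azumaya algebra of degree $n_{\chi}$ over $\ell$. Its Brauer class is $\chi_{*}\alpha_{\ell} \in \H^{2}(\ell, \gm)$. This class is killed by $n_{\chi}$, since period divides index. It is also killed by $p$, because $\alpha$ is $p$-torsion. As $\gcd(n_{\chi}, p) = 1$, we get $\chi_{*}\alpha_{\ell} = 0$. By the Kummer sequence and Hilbert 90, the map $\chi_{*} \colon \H^{2}(\ell, \mmu_{p}) \arr \H^{2}(\ell, \gm)$ is injective. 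Hence $\alpha_{\ell} = 0$.

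\emph{Descent to $k$.} The composite of corestriction and restriction is multiplication by $[\ell : k]$, which is prime to $p$. Since $\alpha$ is $p$-torsion, it follows that $\alpha = 0$, so $\cG(k) \neq \emptyset$.

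\emph{Main obstacle.} The step I expect to need the most care is making rigorous the decomposition of $\cV$ into Galois-orbit isotypic pieces over a non-neutral gerbe with a twisted band. In particular, one must check that the $\chi$-component really is a $\chi$-twisted sheaf over $\cG_{\ell}$ and that its endomorphism algebra represents $\chi_{*}\alpha$ with the correct sign. Choosing $\chi$ or $\chi^{-1}$ does not matter, since both have the same $n_{\chi}$ parity with respect to divisibility by $p$.
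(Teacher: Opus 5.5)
Your proof is correct and is essentially the paper's argument specialized to $|G| = p$. The paper obtains the statement from \Cref{easy-cyclic} via \Cref{splitting} and \Cref{generalcriterion}, following the same steps: pass to an extension of degree prime to $p$ over which the $\chi$-eigenbundle of rank $n_{\chi}$ prime to $p$ is defined, show that the image of the class in $\H^{2}(\gm)$ vanishes, and conclude by Hilbert~90 injectivity and restriction--corestriction. The only differences are cosmetic: the paper kills $\chi_{*}\alpha$ with the line bundle $\det \cV_{\chi}$, whose character $n_{\chi}\chi$ is still faithful, rather than with the Azumaya algebra of endomorphisms of $\cV_{\chi}$; and the decomposition you flag as the main obstacle is just the eigensheaf splitting for gerbes with diagonalizable band, which applies directly over $\ell$ because $A_{\ell}\simeq\mmu_{p}$.
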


This is a particular case of \Cref{easy-cyclic}. The case $p = 2$ is already contained in \cite{giulio-angelo-arithmetic}.

Here are two corollaries, which cannot be proved with the techniques of \cite{giulio-angelo-arithmetic}.

\begin{corollary}
Let $X$ be a smooth projective curve over $\overline{k}$, $p$ a prime different from $\cha k$. Assume the following two conditions.

\begin{enumerate1}

\item The automorphism group $\aut X$ of $X$ is cyclic of order~$p$.

\item The difference between the genus of $X$ and the genus of $X/\aut X$ is not divisible by $p$.

\end{enumerate1}

Then $X$ is defined over its field of moduli.
\end{corollary}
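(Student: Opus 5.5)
The plan is to apply the theorem above to the representation $V = \H^{0}(X, \omega_{X})$ of $G = \aut X$. By Serre duality this is dual to $\H^{1}(X, \cO_{X})$, and either one is intrinsically attached to $X$. Here $\dim V = g_{X}$, the genus of $X$.

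First I identify the invariants. Since $p \neq \cha k$, the group $G$ is linearly reductive. Let $\pi \colon X \arr Y = X/G$ be the quotient map. Then $\H^{0}(X, \omega_{X})^{G}$ is identified with $\H^{0}(Y, \omega_{Y})$, because invariant regular differentials on $X$ are exactly the pullbacks of regular differentials on $Y$. One way to see this: $(\pi_{*}\cO_{X})^{G} = \cO_{Y}$, which gives $\H^{1}(X,\cO_{X})^{G} = \H^{1}(Y,\cO_{Y})$, and then dualize. Alternatively, compute locally at the ramification points, where the ramification is tame: a differential $f\,dt$ that is invariant descends, and the pole-order computation shows that regularity is preserved. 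In either case $\dim V - \dim V^{G} = g_{X} - g_{Y}$, which by hypothesis (2) is not divisible by $p$. So $V$ is a neutral representation of the cyclic group of order $p$ by the theorem above.

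Next I set up the descent. Let $\ell$ be the field of moduli and $\cG_{X} \arr \spec \ell$ the residual gerbe; I would take its existence as given from \cite{giulio-angelo-arithmetic}, since $X$ has finite automorphism group. The universal family over $\cG_{X}$ is a smooth projective curve $\cX \arr \cG_{X}$. Cohomology and base change make the pushforward of the relative dualizing sheaf a vector bundle $\cV$ on $\cG_{X}$, and its fiber over the $\overline{k}$-point corresponding to $X$ is $V$ with its natural action of $\aut X$. So $\cG_{X}$ is a form of $\cB G$ carrying a vector bundle that is a form of $V$.

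Finally, neutrality of $V$ gives $\cG_{X}(\ell) \neq \emptyset$, which means $X$ is defined over $\ell$.

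The main obstacle I expect is the careful justification of the middle step. One must make sure the residual gerbe is a form of $\cB G$ in the sense required by \Cref{neutral}, that is, banded compatibly so that the relevant twisting applies. One must also check that the descended bundle is a form of $V$ with the correct $G$-action, which requires compatibility of the construction with base change to $\overline{k}$. The genus computation is standard once tameness is used.
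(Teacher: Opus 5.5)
Your proposal is correct and follows the paper's route: the paper obtains this corollary as the case $|G| = p$ of \Cref{curve-cyclic}. There it shows that $\H^{0}(X, \Omega_{X/\overline{k}})$ is neutral by identifying its $H_{p}$-invariants with $\H^{0}(X/H_{p}, \Omega_{(X/H_{p})/\overline{k}})$ and applying \Cref{easy-cyclic}, exactly as you do. The step you flag as the main obstacle is in fact immediate: the $\overline{k}$-point $X$ of $\cG_{X}$ gives $\cG_{X,\overline{k}} \simeq \cB_{\overline{k}}G$ with $\cV$ pulling back to $[V/G]$, which is precisely the definition of a $V$-form, so no compatibility of bands needs to be checked.
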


This is a particular case of \Cref{curve-cyclic}. The case $p = 2$ and $X/\aut X = \PP^{1}$ follows from standard results on hyperelliptic curves.

\begin{corollary}
Let $(X, x_{0})$ be an integral variety over $\overline{k}$ with a smooth rational point $x_{0} \in X(k)$, $p$ a prime different from $\cha k$. Assume the following two conditions.

\begin{enumerate1}

\item The automorphism group $\aut (X, x_{0})$ is cyclic of order~$p$.

\item The difference between $\dim X$ and the dimension of the fixed point locus $X^{G}$ at $x_{0}$ is not divisible by $p$.

\end{enumerate1}

Then $(X, x_{0})$ is defined over its field of moduli.
\end{corollary}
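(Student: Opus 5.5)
We reduce to the theorem on cyclic groups of order $p$ stated above, using the tangent space at the marked point as the representation. Let $G = \aut(X, x_{0})$, let $\ell$ be the field of moduli, and let $\cG = \cG_{(X,x_{0})} \arr \spec \ell$ be the residual gerbe. It is a form of $\cB G$; since $p \neq \cha k$, the group $G$ is étale and diagonalizable over $\overline{k}$.

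\textbf{Step 1: the representation descends to the gerbe.} Let $V = T_{x_{0}}X$ be the tangent space at the smooth point $x_{0}$, which has dimension $d = \dim X$. Since $G$ fixes $x_{0}$, it acts linearly on $V$. The tangent space is intrinsic to the pair $(X, x_{0})$. Concretely, the universal pointed family over $\cG$ is a morphism $\cX \arr \cG$ with a section $\sigma$, and its pullback along $\spec \overline{k} \arr \cG$ is $(X, x_{0})$. So $\sigma^{*}T_{\cX/\cG}$ is a vector bundle $\cV$ on $\cG$, defined near $\sigma$ where $\cX \arr \cG$ is smooth, and $\cV$ is a form of the representation $V$. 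One has to make sure smoothness at the section holds over the gerbe. It does, because smoothness descends along the fppf cover $\spec \overline{k} \arr \cG$.

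\textbf{Step 2: compute the invariants.} Since $|G| = p$ is invertible in $k$, the action of $G$ on the local ring $\cO_{X,x_{0}}$ is linearly reductive. By the standard Cartan-type linearization argument, the fixed locus $X^{G}$ is smooth at $x_{0}$ and $T_{x_{0}}(X^{G}) = V^{G}$. To see this, choose a $G$-equivariant splitting $\frm/\frm^{2} \arr \frm$. This gives an equivariant isomorphism of the completed local ring with the completed symmetric algebra on $\frm/\frm^{2}$. The fixed locus is then cut out by the non-trivial isotypic coordinates, so it is formally a linear subspace. Hence $\dim V - \dim V^{G} = \dim X - \dim_{x_{0}} X^{G}$, which by hypothesis (2) is not divisible by $p$.

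\textbf{Step 3: conclude.} By the theorem on cyclic groups of order $p$ (the particular case of \Cref{easy-cyclic}), $V$ is a neutral representation. Since $\cV$ is a vector bundle on $\cG$ that is a form of $V$, the gerbe $\cG$ is neutral. Therefore $\cG(\ell) \neq \emptyset$, which means $(X, x_{0})$ is defined over $\ell$.

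\textbf{Main obstacle.} The hardest point is Step 1. It requires the residual gerbe to exist under the paper's general hypotheses (as in \cite{giulio-angelo-arithmetic}, using that $X$ is integral so that the automorphism group scheme is finite), and it requires the tangent space to descend along the gerbe, i.e.\ that $\cV$ is actually a form of $V$ compatibly with the $G$-action. I would handle this by working with the universal pointed family over $\cG$ and pulling back the relative tangent sheaf along the universal section, as in Step 1.
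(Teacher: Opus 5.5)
Your proof is correct and follows the paper's argument. You descend the tangent space at $x_{0}$ to a vector bundle on the residual gerbe; the paper does this with the dual of the conormal sheaf $\rI/\rI^{2}$ of the section, which avoids your discussion of smoothness near the section. You then use linear reductivity to get $X^{G}$ smooth at $x_{0}$ with tangent space $V^{G}$, and apply \Cref{easy-cyclic}, exactly as the paper does. One small correction: $\spec \overline{k} \arr \cG$ is an fpqc cover, not an fppf one, when $\overline{k}/\ell$ is infinite; smoothness still descends along it, or you can pass to a finite extension that neutralizes $\cG$.
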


This is a particular case of \Cref{marked-cyclic}. The case in which $p = 2$ and $x_{0}$ is an isolated fixed point is the content of \cite[Corollary~6.25]{giulio-angelo-arithmetic}.

In \cite{giulio-singularities}, the first author classified $R$-singularities in dimension $2$: as we shall explain in \Cref{R<->neutral}, this is equivalent to classifying faithful, neutral representations in dimension $2$ without pseudoreflexions. In the forthcoming paper \cite{giulio-tianzhi}, the first and third author complete the classification of faithful, neutral representations in dimension $\le 3$ and characteristic $0$. Furthermore, they introduce new methods for studying faithful, neutral representations of abelian groups in arbitrary dimensions.

\subsection*{Contents of the paper} \Cref{general-setup} contains the definition of a neutral representation. 

In \Cref{R-singularity} we explain the close connection between neutral representations and $R$-singularities, a central tool in \cite{giulio-angelo-arithmetic}. Our methods allow us to give many more, but not all, examples of $R$-singularities.

\Cref{blended} introduces the basic tool that we use, the \emph{blended decomposition} of a representation. 

\Cref{periods} contains an auxiliary technical result that is undoubtedly known to experts, but for which we do not have a reference.

The main results are stated in \Cref{main-theorems}. Together, they give a general condition for a representation to be neutral. The statements are somewhat involved, but they become much simpler for cyclic groups: see \Cref{cyclic-general} for the more general statement, and \Cref{easy-cyclic} for a particularly simple statement. The proofs of the main theorems are in \Cref{proofs}.

Finally, \Cref{applications} contains our applications to fields of moduli of curves and pointed varieties with cyclic automorphism groups, \Cref{curve-cyclic} and \Cref{marked-cyclic}.

\section{General setup, and neutral representations}\label{general-setup}

If $K$ is a field, we denote by $\aff[K]$ the category of affine schemes over $K$. If $G$ is a finite group scheme over a field $K$, or a finite group, we denote by $\cB_{K}G \arr \aff[K]$ the classifying stack of $G$-torsors over $K$-algebras.

All the gerbes we will consider are going to be finite gerbes, in the sense of \cite[Definition~4.1]{borne-vistoli}, over fields; this means the following. Let $K$ be a field, $\cG \arr \aff[K]$ a category fibered in groupoids. We say that $\cG$ \emph{a finite gerbe over $K$} (or, from now on, simply \emph{a gerbe}) if the following conditions are satisfied.

\begin{enumerate1}

\item $\cG \arr \aff[K]$ is a gerbe in the fppf topology.

\item $\cG$ is limit-preserving, in the sense that if $\{A_{i}\}_{i\in I}$ is a filtered system of $K$-algebras, the natural functor $\indlim_{i}\cG(A_{i}) \arr \cG(\indlim_{i}A_{i})$ is an equivalence of categories.

\item The diagonal $\cG \arr \cG \times\cG$ is representable and finite.

\end{enumerate1}

A gerbe $\cG \arr \aff[K]$ is \emph{neutral} if $\cG(K) \neq \emptyset$. Such a gerbe should properly be called ``neutralizable'', and the term ``neutral'' should be reserved for a gerbe with a specified section in $\cG(K)$, but the distinction is not important for our purposes, and we will go with the simpler word. If $\xi \in \cG(K)$ and $G \eqdef \underaut_{K}\xi$, then $\cG$ is equivalent to $\cB_{K}G$.

The conditions above imply that there exists a finite extension $L/K$, a finite group scheme  $G \arr \spec L$, and an equivalence $\cG_{L} \simeq \cB_{L}G$. A finite gerbe over $K$ is a smooth Artin stack of finite type over $K$.

Two gerbes, $\cG$ over $K$ and $\cH$ over $L$, are \emph{equivalent} if there exists a common extension $K \subseteq E$ and $L \subseteq E$ such that $\cG_{E}$ and $\cH_{E}$ are equivalent as fibered categories over $\aff[E]$.

\subsection*{Neutral representations}

Let $K$ a field, $G$ a finite group scheme $K$, $G \arr \GL_{K}(V)$ a finite-dimensional representation of $G$. We can view $V$ as a vector bundle $[V/G] \arr \cB_{K}G$.

If $\cG \arr \spec k$ is a gerbe and $\cV \arr \cG$ is a vector bundle, we say that \emph{the pair $(\cG, \cV)$ is equivalent to $(\cB_{K}G, V)$}, or that $(\cG, \cV)$ is \emph{a model of $(\cB_{K}G, V)$}, if there exists a common extension $E$ of $k$ and $K$, and an equivalence $\alpha\colon \cB_{E}G_{E} \simeq \cG_{E}$, with an isomorphism of vector bundles $\phi\colon [V/G]_{E} \simeq \alpha^{*}\cV_{E}$. over $\cB_{E}G$.

Such a pair $(\alpha, \phi)$ will be called \emph{an equivalence of $(\cB_{K}G, V)$ with $(\cG, \cV)$}. Alternatively, a vector bundle $\cV \arr \cG$ on a gerbe $\cG$ such that $(\cG, \cV)$ is equivalent to $(\cB_{K}G, V)$ will be called \emph{a $V$-form}.

If $G$ is a finite group and $G_{K}$ is the corresponding discrete group scheme over $K$, representations $G \arr \GL_{K}(V)$ correspond to representations $G_{K} \arr \GL_{K}(V)$; from now on we will identify representations of $G$ and representations of $G_{K}$.

The following is the main definition of our paper. 

\begin{definition}\label{neutral}
Let $K$ be a field, $G$ a finite group scheme of over $K$, $G \arr \GL_{K}(V)$ a finite-dimensional representation of $G$. We say that $V$ is \emph{neutral} if, for every $V$-form $\cV \arr \cG$, the gerbe $\cG$ is neutral.
\end{definition}

Of course, this definition could be greatly generalized; one could also consider non-finite group schemes, and sets of representations rather than a single representation. One can also fix a base field $k$, and only consider gerbes over extensions of $k$. These generalizations will not be pursued in the present paper.


If $K'$ is an extension of $K$, and $V$ is a representation of $G$ over $K$, then the representation $V_{K'}$ over $K'$ by extension of scalars is neutral if and only if $V$ is neutral. So, in discussing what representations are neutral we can always assume that $K$ is algebraically closed.

\section{Neutral representations and $R$-singularities}\label{R-singularity}

Let us recall the notion of $R$-singularity. The following definitions are contained in \cite{giulio-angelo-arithmetic}.

\begin{definition}\hfil
\begin{enumeratea}

\item A \emph{singularity} over a field $k$ is a pair $(S,s)$, where $S$ is a scheme of finite type over $k$, and $s \in S(k)$ is a rational point.

\item Two singularities $(S,s)$ and $(S', s')$ over $k$ are \emph{equivalent} if $\widehat{\cO}_{S,s}$ is isomorphic to $\widehat{\cO}_{S',s'}$ as a $k$-algebra. By Artin's approximation theorem, this is equivalent to saying that $(S,s)$ and $(S's')$ have isomorphic étale neighborhoods.

\item Two singularities $(S,s)$ and $(S', s')$ over two fields $k$ and $k'$ with the same characteristic are \emph{stably equivalent} if there exists a common extension $k \subseteq k''$ and $k' \subseteq k''$ such that $(S,s)_{k''}$ and $(S',s')_{k''}$ are equivalent.

\item A singularity $(S,s)$ is a \emph{tame quotient singularity} if it stably equivalent to a singularity of type $(V/G, [0])$, where $G$ is a finite discrete group with $\cha k \nmid |G|$ and $V$ is a representation of $G$ over an algebraically closed field $K$. By Cartan's theorem, this is equivalent to the existence of a variety $V$ over an algebraically closed field $K$ with a smooth point $v_{0} \in V(K)$, together with the action of a finite discrete group with $\cha k \nmid |G|$ on $V$ leaving $v_{0}$ fixed, such that $(S,s)$ is stably equivalent to $(V/G, [v_{0}])$. 

\item A singularity $(S,s)$ over $k$ is \emph{liftable} if given a resolution of singularities $\widetilde{S}\arr S$, there is a $k$-rational point of $\widetilde{S}$ lying over $s$. By the Lang--Nishimura's theorem, this condition does not depend on the resolution $\widetilde{S}$.

\item An \emph{$R$-singularity} is a tame quotient singularity $(S,s)$, such that every singularity $(S',s')$ that is stably equivalent to $(S,s)$ is liftable.

\end{enumeratea}
\end{definition}

$R$-singularities and neutral representations are strictly linked.

\begin{theorem}\label{R<->neutral}
Let $G$ be a finite group with $p \nmid |G|$, and let $G \subseteq \GL_{K}(V)$ be a faithful representation, where $K$ is an algebraically closed field with $\cha K \nmid |G|$. 

\begin{enumerate1}

\item Assume that $(V/G, [0])$ is an $R$-singularity. Then the representation $V$ is neutral.

\item Assume that $G$ does not contain any pseudoreflexion, and that the representation $V$ is neutral. Then $(V/G, [0])$ is an $R$-singularity.

\end{enumerate1}
\end{theorem}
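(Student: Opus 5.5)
The strategy is to pass between $V$-forms on gerbes and singularities stably equivalent to $(V/G,[0])$, using the tautological construction in both directions. Given a $V$-form $\cV \arr \cG$ over a field $k$, the total space $\cV$ is a smooth algebraic stack. Its coarse moduli space $S \eqdef \cV/\!\!/\cG$ (available since $G$ is tame, $p\nmid|G|$) carries a rational point $s$, the image of the zero section $\cG \arr \cV$. Over a common extension $E$ we have $\cV_E \simeq [V_E/G]$, hence $S_E \simeq V_E/G$ compatibly with the point. So $(S,s)$ is stably equivalent to $(V/G,[0])$.

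\textbf{Part (1).} Suppose $(V/G,[0])$ is an $R$-singularity and take a $V$-form $\cV\arr\cG$ over $k$, with $(S,s)$ as above. Then $(S,s)$ is liftable. Faithfulness of $V$ implies that the locus $\cV^{\circ}\subseteq\cV$ where the stabilizers are trivial is a dense open substack which is an algebraic space, in fact a scheme $U$ open in $S$. Choose a resolution $\widetilde S \arr S$ which is an isomorphism over the smooth locus. By Lang--Nishimura, any resolution works, so we may instead compare with $\cV$ itself through a weighted blowup or the rational map $\widetilde S \dashrightarrow \cV$. The argument is as follows. A $k$-point of $\widetilde S$ over $s$ exists. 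Since $\cV\arr S$ restricted to $U$ is an isomorphism, we get a rational map $\widetilde S \dashrightarrow \cV$ from a smooth variety with a rational point. The stacky Lang--Nishimura theorem for tame stacks with proper coarse map, proved in \cite{giulio-angelo-arithmetic}, then gives $\cV(k)\neq\emptyset$ after passing to a proper neighborhood. More carefully, we apply it to $\widetilde S\dashrightarrow \cV \arr \cG$, where $\cG$ is proper (finite) over $k$. Then $\cG(k)\ne\emptyset$, so $\cG$ is neutral. The main obstacle is ensuring that the Lang--Nishimura-type statement applies with target $\cG$. The point is that the composite rational map to $\cG$ lands in a finite gerbe, and \cite{giulio-angelo-arithmetic} proves exactly that a smooth $k$-variety with a rational point dominating a finite tame gerbe forces the gerbe to be neutral. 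I would cite that result.

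\textbf{Part (2).} Now assume $G$ has no pseudoreflections and $V$ is neutral. Let $(S',s')$ over $k$ be stably equivalent to $(V/G,[0])$; we must show it is liftable. The key step is to reconstruct the gerbe from the singularity. Since $G$ has no pseudoreflections, $[V/G]$ is the canonical smooth stack over $V/G$: it is the stack of the quotient singularity, unique by the purity of the branch locus / Vistoli's characterization of canonical stacks. Hence there is a canonical tame smooth stack $\cX\arr S'$, étale-locally over a neighborhood of $s'$, whose residual gerbe $\cG$ at $s'$ is defined over $k$. The normal bundle $\cV$ of $\cG$ in $\cX$ is then a $V$-form, because after base change the pair becomes $(\cB G, V)$ by uniqueness of the canonical stack. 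Note that stable equivalence only gives complete local rings, so we first replace $S'$ by an étale neighbourhood using Artin approximation; this is harmless for liftability. Neutrality of $V$ gives $\cG(k)\ne\emptyset$, so $\cX(k)\neq\emptyset$ over $s'$. The stack $\cX$ is smooth, and the rational map $\widetilde{S'}\dashrightarrow \cX$ is birational. Applying the stacky Lang--Nishimura theorem in the other direction (from a smooth stack with a rational point to the proper map $\widetilde{S'}\arr S'$) yields a rational point of $\widetilde{S'}$ over $s'$. The step I expect to be hardest is the canonicity argument, since it is exactly where the absence of pseudoreflections is needed.
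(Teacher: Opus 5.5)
Your proposal is correct and follows the paper's proof. In (1) you take the coarse space of the total space of the $V$-form and apply a tame-stack Lang--Nishimura argument. In (2) you use the minimal (canonical) stack, which exists because $G$ has no pseudoreflections, take the normal bundle of the reduced fibre over $s'$ as a $V$-form, and then run Lang--Nishimura in the reverse direction; the paper does both Lang--Nishimura steps via \cite[Corollary~4.4]{giulio-angelo-valuative}.

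The one small variation is in (1): you compose $\widetilde S \dashrightarrow \cV \arr \cG$ and apply the absolute Lang--Nishimura theorem to the proper tame gerbe $\cG$, whereas the paper applies the relative statement over the moduli space. Your version is valid. However, you should delete the sentence claiming $\cV(k)\neq\emptyset$ ``after passing to a proper neighborhood'', since $\cV$ is not proper over $k$; alternatively, replace it with the relative statement, which uses that $\cV \arr S$ is proper.
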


\begin{proof}

Let $\cV \arr \cG$ be a $V$-form over a field $k$ By extending $K$ we may assume that $k \subseteq K$, and that $(\cG, \cV)_{K}$ is equivalent to $(\cB_{K}G, [V/G])$. Call $M$ the moduli space of $\cV$, fix $m_{0} \in M(k)$, and let $\widetilde{M} \arr M$ be a resolution of singularities. The base change $M_{K}$ is isomorphic to $V/G$, and the image $m_{0} \in M(k)$ corresponds to $[O] \in V/G$.

Consider the embedding $\cG \subseteq \cV$ given by the $0$-section; the image of $\cG$ in $M$ is $m_{0}$. The reduced inverse image of $m_{0}$ in $\cV$ is $\cG$. Since $(V/G, [0])$ is an $R$-singularity, the resolution $\widetilde{M}$ has a rational point over $m_{0}$. Then \cite[Corollary~4.4]{giulio-angelo-valuative} shows that $\cG(k)$ is not empty, and the first statement follows.

Now, assume that $G \subseteq \GL_{K}(V)$ does not contain any pseudoreflexion, and $V$ is neutral. Let $(S,s)$ be a singularity over a field $k$ which is stably equivalent to $(V/G, [0])$, and let $\widetilde{S} \arr S$ be a resolution of singularities. Call $\cS \arr S$ the minimal stack of $S$ (see \cite[\S 6.1]{giulio-angelo-arithmetic}). We need to show that $\widetilde{S}$ has a rational point over $k$. Denote by $\cG$ the reduced inverse image of $s$ in $\cS$, and let $\cV \arr \cG$ be the normal bundle of $\cG$ in $\cS$. Again by \cite[Corollary~4.4]{giulio-angelo-valuative} it is enough to show that $\cG$ has a rational point. Since $[V/G] \arr V/G$ is the minimal stack, because $G$ does not contain any pseudoreflexions, and formation of the minimal stack commutes with base change on the base field, we have that $\cS_{K}$ is equivalent to $[V/G]$, and $\cG$ is the substack $[0/G]\subseteq [V/G]$. Hence $\cV \arr \cG$ is a $V$-form, and the conclusion follows.
\end{proof}

\begin{example}
	Let us show that the condition that $G$ does not contain any pseudoreflexion in \Cref{R<->neutral}.(2) is necessary. Take an algebraically closed field $K$ of characteristic different from $2$, let $C_{4}$ be a cyclic group of order $4$, and $\chi$ a faithful character of $C_{4}$. Consider the $2$-dimensional representation $V = \chi \oplus \chi^{2}$ of $C_{4}$.
	
	The element of order $2$ of $C_{4}$ acts as a pseudoreflexion on $V$, so $V/C_{2}$ is smooth by the Chevalley--Shephard--Todd theorem and the singularity $(V/C_{4},[0]) = ((V/C_{2})/C_{2}, [0])$ is equivalent to the singularity $\left( (\rho \oplus \rho)/C_{2}, [0] \right)$, where $\rho$ is a faithful character of $C_{2}$. The representation $V$ of $C_{4}$ is neutral by \Cref{easy-cyclic}, but $(V/C_{4},[0]) \sim \left( (\rho \oplus \rho)/C_{2}, [0] \right)$ is not and $R$-singularity by \cite[Theorem 4]{giulio-singularities}.
\end{example}

\begin{corollary}
Let $G \subseteq \GL_{n}(K)$ be a finite discrete subgroup with $\cha k \nmid |G|$, which is generated by pseudoreflexions. Then the corresponding representation is neutral.
\end{corollary}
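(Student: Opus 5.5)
We plan to deduce the corollary from part (1) of \Cref{R<->neutral}. It therefore suffices to show that when $G$ is generated by pseudoreflexions, $(V/G,[0])$ is an $R$-singularity, where $V = K^{n}$.

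\textbf{Step 1: the quotient is smooth.} By the Chevalley--Shephard--Todd theorem (tame version, valid since $\cha K \nmid |G|$), the invariant ring $K[V]^{G}$ is a polynomial ring in $n$ homogeneous generators. Hence $V/G \simeq \AA^{n}_{K}$, and the singularity $(V/G,[0])$ is equivalent to $(\AA^{n}_{K},0)$, a smooth point.

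\textbf{Step 2: every stably equivalent singularity is liftable.} Let $(S,s)$ be a singularity over a field $k$ that is stably equivalent to $(V/G,[0])$. Then there is an extension $k''$ of $k$ and $K$ with $\widehat{\cO}_{S,s}\otimes_{k}k''$ (completed) isomorphic to $k''[[x_{1},\dots,x_{n}]]$. Regularity of a complete local ring at a rational point descends along field extensions, since regularity is equivalent to $\dim_{k}\frm/\frm^{2} = \dim \widehat{\cO}_{S,s}$, and both sides are invariant under base change of the residue field. Therefore $s$ is a smooth point of $S$. Let $\widetilde{S}\arr S$ be a resolution of singularities; it is an isomorphism over a neighbourhood of the regular point $s$, or at any rate the Lang--Nishimura theorem applied to a birational proper map to a scheme smooth at $s$ gives a rational point over $s$. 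So $\widetilde{S}$ has a $k$-rational point over $s$, i.e.\ $(S,s)$ is liftable. Since $(V/G,[0])$ is a tame quotient singularity by definition, it is an $R$-singularity, and \Cref{R<->neutral}(1) gives that $V$ is neutral.

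\textbf{Main obstacle.} The delicate point is Step 2: making precise that smoothness descends from $k''$ to $k$ for a rational point, and that a resolution has a rational point over a smooth rational point. The latter is the standard Lang--Nishimura fact that smooth rational points lift along proper birational maps; we would cite it in the form used to show that liftability is independent of the resolution.

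An alternative, more direct route is also available. Given a $V$-form $\cV\arr\cG$, the moduli space $M$ of $\cV$ becomes $\AA^{n}$ after base change. Then $m_{0}$ is a smooth rational point, and \cite[Corollary~4.4]{giulio-angelo-valuative} applied with $\widetilde{M}=M$ gives $\cG(k)\neq\emptyset$. This is just the proof of \Cref{R<->neutral}(1) specialised to this case.
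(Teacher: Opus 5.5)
Your proposal is correct and follows the paper's proof. Chevalley--Shephard--Todd makes $V/G$ smooth, so $(V/G,[0])$ is an $R$-singularity, and part (1) of the theorem relating $R$-singularities and neutral representations then gives neutrality. Your Step 2 spells out what the paper leaves implicit (regularity descends at a rational point, and Lang--Nishimura gives the lift); keep the Lang--Nishimura argument rather than the claim that a resolution is an isomorphism over the smooth locus, which need not hold.
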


\begin{proof}
Set $V \eqdef \AA^{n}$. By the Chevalley--Shephard--Todd theorem, the quotient $V/G$ is smooth; hence $(V/G, [0])$ is an $R$-singularity, so the result follows from the theorem.
\end{proof}

\section{Blended eigenspaces}\label{blended}
In the rest of the paper we will discuss neutral representations of finite diagonalizable groups. The field $K$ will be algebraically closed, and $G$ will always be a finite diagonalizable group scheme over $K$. We will that $G$ is \emph{cyclic} if it is isomorphic to $\mmu_{n}$ for some natural number $n$.

We denote by $\widehat{G} \eqdef \hom_{K}(G, \gm)$ the group of characters of $G$. We will use additive notation for $\widehat{G}$.

Fix a representation $\rho_{V}\colon  G \xarr{\rho_{V}} \GL_{K}(V)$ of $G$ over $K$; we will usually simply say ``$V$ is a representation of $G$''. Then $V = \oplus_{\chi\in \widehat{G}}V_{\chi}$ has an eigenspace decomposition, where $G$ acts on $V_{\chi}$ by multiplication through $\chi$.

We denote by $\aut_{V} G$ the subgroup of the automorphism group $\aut G$ consisting of automorphisms $\phi\colon G \simeq G$ with the property that the composite representation $\rho_{V}\circ \phi\colon G \arr \GL_{K}(V)$ is isomorphic to $V$.  An automorphism $\phi$ of $G$ is in $\aut_{V}G$ if  and only if $\dim_{K}V_{\chi} = \dim_{K}V_{\chi\circ \phi}$ for every $\chi\in \widehat{G}$.

Denote by $\Omega_{V}$ the set of orbits for the natural action of $\aut_{V}G$ on $\widehat{G}$; clearly, if two characters $\chi$ and $\chi'$ are in the same orbit, we have $\dim V_{\chi} = \dim V_{\chi'}$. If $\omega \in \Omega_{V}$, we denote by $V_{\omega} \subseteq V$ the direct sum $\oplus_{\chi \in \omega}V_{\chi}$. Two characters $\chi$ and $\chi'$ are \emph{V-equivalent} if they are in the same orbit.
 
\begin{definition}
If $\omega \in \Omega_{V}$, we denote by $V_{\omega} \subseteq V$ the direct sum $\oplus_{\chi \in \omega}V_{\chi}$. The subrepresentations $V_{\omega} \subseteq V$ are called \emph{blended eigenspaces}.

A \emph{blended subrepresentation} of $V$ is a direct sum of blended eigenspaces.

The \emph{blended decomposition} of $V$ is the decomposition 
   \[
   V = \bigoplus_{\omega \in \Omega_{V}}V_{\omega}
   \]
\end{definition}

It is easy to see that a subspace $W \subseteq V$ is a blended subrepresentation if and only if it is preserved by every $\phi$-equivariant automorphism of $V$, where $\phi \in \aut_{V}G$.

The idea of the blended decomposition is to define the finest decomposition of $V$ which is guaranteed to induce a decomposition of every $V$-form. 

%

\begin{proposition}\label{prop:blend}
Let $\cV\to\cG$ be a $V$-form over $L$. The blended decomposition of $V$ induces a decomposition $\cV = \oplus_{\omega \in \Omega_{V}}\cV_{\omega}$. 
\end{proposition}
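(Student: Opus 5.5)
We prove this by descent: first build the decomposition after a field extension, then check that it is independent of all choices, so that it descends to $L$. Since $\cV \arr \cG$ is a $V$-form, there is an extension $E$ of $L$ (and of $K$), which we may take algebraically closed, together with an equivalence $\alpha\colon \cB_{E}G_{E} \simeq \cG_{E}$ and an isomorphism $\phi\colon [V/G]_{E} \simeq \alpha^{*}\cV_{E}$. Through $(\alpha,\phi)$ we transport the blended decomposition $V_{E} = \oplus_{\omega} (V_{\omega})_{E}$, a decomposition of $G_{E}$-representations, to a decomposition $\cV_{E} = \oplus_{\omega}\cV_{\omega,E}$ into subbundles. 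The whole point is to show that this decomposition does not depend on the choice of $(\alpha,\phi)$, even after further base change. Once that holds, a standard fpqc descent argument for subbundles (quasi-coherent subsheaves) of $\cV$ on the stack $\cG$ yields the subbundles $\cV_{\omega}\subseteq \cV$ over $L$.

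\textbf{Step 1: independence of the choice.} Any two equivalences $(\alpha,\phi)$, $(\alpha',\phi')$ over an extension $F$ of $E$ differ by an autoequivalence of $(\cB_{F}G_{F}, [V/G]_{F})$, namely an autoequivalence $\beta$ of $\cB_{F}G_{F}$ together with an isomorphism $[V/G]_{F} \simeq \beta^{*}[V/G]_{F}$. Over an algebraically closed field, $F$-points of $\cB_{F} G_F$ are all isomorphic. Hence, up to $2$-isomorphism (which amounts to conjugation and is trivial since $G$ is abelian), $\beta$ is induced by a group automorphism $\psi\in\aut G_{F}$. The isomorphism $V \simeq \beta^{*}V$ then says that $\rho_{V}\circ\psi \simeq \rho_{V}$, so $\psi\in\aut_{V}G$, and the isomorphism is a $\psi$-equivariant linear automorphism of $V_{F}$. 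By the remark preceding the proposition, the blended subrepresentations are exactly the subspaces preserved by every such $\psi$-equivariant automorphism; more precisely, each $V_{\omega}$ is carried to itself, since $\psi$ permutes the characters within each orbit $\omega$. So the two transported decompositions of $\cV_{F}$ coincide.

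\textbf{Step 2: descent.} Let $L'$ be an algebraic closure of $L$ inside $E$; by the limit-preserving property we may realize the equivalence already over some finite extension, and we may take $E = \overline{L}$. To descend the subbundles we use the fpqc cover $\spec E \arr \spec L$, i.e.\ we compare the two pullbacks of $\cV_{\omega,E}$ to $\cG_{E\otimes_{L}E}$. The cleanest way is to formulate Step 1 for arbitrary $E$-algebras rather than only fields. The key input is that fppf-locally on $\spec(E\otimes_L E)$, any two trivializations of $(\cG,\cV)$ differ by a $\psi$-twisted automorphism where $\psi$ is a section of the finite étale (or finite, for group schemes) scheme $\underaut_{V}G$. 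Since $G$ is diagonalizable, $\underaut G$ is the constant group $\aut\widehat{G}$, and so the argument of Step 1 applies locally. Then the subbundles $\cV_{\omega,E}$ satisfy the cocycle condition trivially, being canonically defined, and descend.

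\textbf{Main obstacle.} The hardest part is making Step 1 work over non-field bases, i.e.\ identifying the sheaf of autoequivalences of the pair $(\cB G, V)$ as an extension of a subsheaf of the constant group $\aut_{V}G$ by $\underaut$ of the equivariant bundle. I would handle this by noting that $\underaut_{\cB G}$ of the gerbe is the band, which for abelian $G$ is $G$ itself acting trivially on characters. Autoequivalences of $\cB G$ over a connected base are then given by $(\psi,\text{torsor})$, where the torsor twist acts on $V$ only by scalars on each $V_\chi$. Hence every autoequivalence preserves each $V_{\omega}$, locally on connected components.
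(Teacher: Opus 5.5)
Your argument is correct, but it takes a different route from the paper's.

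\textbf{The paper's route.} It never chooses a trivialization. It passes to a finite Galois extension $L'/L$ over which the band $A$ of $\cG$ becomes diagonalizable. There it uses Lieblich's canonical splitting of $\cV_{L'}$ into eigensheaves $\cV_{\chi}$ for the action of the band. It concludes by Galois descent: $\mathrm{Gal}(L'/L)$ permutes the $\cV_{\chi}$ through its action on $\widehat{A}_{L'}$, which preserves ranks and hence acts through $\aut_{V}G$, so each $\cV_{\omega}$ is Galois-stable.

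\textbf{Your route.} You transport the blended decomposition through an equivalence $(\alpha,\phi)$ over a large field and show it is independent of $(\alpha,\phi)$. You do this by describing an autoequivalence of $(\cB_{S}G,[V/G]_{S})$ over an arbitrary base $S$ as a locally constant $\psi\in\aut\widehat{G}$, a $G$-torsor $T$, and a graded isomorphism. The rank condition forces $\psi\in\aut_{V}G$, and then every blended piece is preserved.

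\textbf{What each buys.} Your version is self-contained and makes visible why $\aut_{V}G$, rather than $\aut G$, is the relevant symmetry. It also proves directly that the decomposition is independent of all choices, which is what is used afterwards (for instance for $\det\cV_{\omega}$ in \Cref{example-descent}). The paper's version needs only finite Galois descent and an intrinsic decomposition from the literature. It thereby avoids fpqc descent on stacks and the analysis of autoequivalences over the (in general non-noetherian) ring $E\otimes_{L}E$.

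\textbf{Minor points.} The reduction to $E=\overline{L}$ is unnecessary, because $\spec E\arr\spec L$ is an fpqc cover for any field extension. The ``connected components'' of $\spec(E\otimes_{L}E)$ should be replaced by the finitely many open and closed subsets on which $\psi$ is constant. Finally, the twist by $T$ does not act by scalars globally. It replaces $V_{\chi}\otimes\cO_{S}$ by its tensor product with the line bundle of the $\gm$-torsor $\chi_{*}T$, which still preserves the grading, and that is all you need.
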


\begin{proof}
	Let $A$ be the band of $\cG$, and $A'$ a diagonalizable group scheme over $L$ such that $A_{\bar{L}}\simeq G$. Since $\underaut(G)$ is a finite étale group scheme and $\underisom_{L}(A,A')$ is an $\underaut_{L}(G)$-torsor, there exists a finite Galois extension $L'/L$ such that $A_{L'}\simeq A'_{L'}$ is diagonalizable. By \cite[Proposition 2.2.1.6]{lieblich}, $\cV_{L'}$ splits as a direct sum of eigensheaves corresponding to the eigenspaces of $V$. The statement follows by Galois descent.
\end{proof}

\section{Periods of abelian gerbes}\label{periods}

Let $A$ be an abelian, finite group scheme over a field $k$ and $\alpha\in\H^{i}(k,A)$ an fppf cohomology class for $i\ge 1$. The \emph{period} of $\alpha$ is its order in $\H^{i}(k,A)$, while the index of $\alpha$ is the greatest common divisor of the degrees of its splitting fields. The period of $\alpha$ divides the order of $A$.

It is well-known that, for étale (Galois) cohomology classes, the period divides the index and they have the same prime factors. This is still true for non-reduced group schemes and fppf cohomology.

\begin{lemma}\label{lemma:periodindex}
	The period of $\alpha$ divides its index, and they have the same prime factors.
\end{lemma}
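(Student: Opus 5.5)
We prove the two halves separately: period divides index by a restriction–corestriction (norm) argument, and the shared prime factors by a reduction to a field extension of degree prime to $\ell$.

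\emph{Period divides index.} If $L/k$ is a finite extension splitting $\alpha$, we want $[L:k]\alpha = 0$. Since $A$ is commutative and finite, there is a norm (trace) map $N_{L/k}\colon \mathrm{R}_{L/k}A_{L} \arr A$, with $N_{L/k}\circ \iota = [L:k]$, where $\iota\colon A \arr \mathrm{R}_{L/k}A_{L}$ is the adjunction map. For example, one can embed $A$ into a smooth commutative group (a torus or $\GL$-type resolution) where norms are classical; alternatively, one can use the norm for finite locally free morphisms of fppf sheaves of abelian groups. This gives a corestriction $\mathrm{cor}\colon \H^{i}(L,A)\arr \H^{i}(k,A)$ with $\mathrm{cor}\circ\mathrm{res} = [L:k]$. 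To get this, identify $\H^{i}(L,A_{L}) = \H^{i}(k,\mathrm{R}_{L/k}A_{L})$, using that $\mathrm{R}^{j}$ of pushforward along the finite morphism $\spec L \arr \spec k$ vanishes in fppf cohomology for $j>0$ for finite commutative group schemes, or at least that the relevant Leray map exists, and then compose with $N_{L/k}$. Since $\alpha$ restricts to $0$ on $L$, we get $[L:k]\alpha = 0$. Taking the gcd over splitting fields then shows that the period divides the index. This is the step I expect to need the most care: constructing the norm/corestriction for non-smooth $A$ in fppf cohomology. If needed, I would bypass it using a resolution $0\arr A\arr T_{1}\arr T_{2}\arr 0$ by smooth commutative groups (e.g.\ embed $A$ in a Weil restriction of $\gm^{n}$ or in a smooth affine commutative group). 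This reduces the computation to $\H^{i}$ and $\H^{i-1}$ of smooth groups, where fppf and étale cohomology agree and Galois corestriction is available.

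\emph{Same prime factors.} It remains to show that every prime $\ell$ dividing the index divides the period. Suppose $\ell$ does not divide the period $n$; we show $\ell$ does not divide the index. Decompose $A = \prod_{q} A_{q}$ into its $q$-primary parts, so $\H^{i}(k,A)=\oplus_{q}\H^{i}(k,A_{q})$ and $\alpha = \sum_{q}\alpha_{q}$. Since $\ell\nmid n$, the component $\alpha_{\ell}$ is zero, because $n\alpha_{\ell}=0$ and $\alpha_{\ell}$ is $\ell$-primary. For each prime $q\neq\ell$, it suffices to find a splitting field of $\alpha_{q}$ of degree prime to $\ell$. Taking the compositum over the finitely many $q$ (degrees multiply) then gives a splitting field of $\alpha$ of degree prime to $\ell$, hence $\ell\nmid$ index.

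So we reduce to showing that a class $\beta\in\H^{i}(k,A_{q})$ with $A_{q}$ of $q$-power order is split by a finite extension of $q$-power degree, up to a factor prime to $\ell$. First I would pass to a finite extension of degree prime to $q$ over which $A_{q}$ admits a composition series with simple factors $\ZZ/q$, $\mmu_{q}$, or $\alpha_{q}$-type infinitesimal groups. Over a perfect closure-like extension, infinitesimal parts become harmless; for those I would use purely inseparable extensions of $q$-power degree, where $q=\cha k$. Then, by dévissage on the composition series, it is enough to split classes in $\H^{i}$ of these simple groups by extensions of $q$-power degree. For $\ZZ/q$ and $\mmu_{q}$ this follows from the classical Galois statement, via Kummer/Artin--Schreier and the fact that a $q$-Sylow field kills $q$-torsion Galois cohomology, using finite-level approximation. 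For $q=\cha k$, every fppf class of a $q$-group is killed by a purely inseparable extension composed with an extension of $q$-power degree. This again yields degree prime to $\ell$, which completes the argument.
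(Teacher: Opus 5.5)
The first half is the paper's argument: restriction followed by corestriction gives $[h:k]\alpha=0$ for every splitting field $h$. One caution: the vanishing of $\rR^{j}\pi_{*}$ for the finite morphism $\pi\colon\spec L\arr\spec k$ fails in the fppf topology when $L/k$ is inseparable. It already fails for $\mmu_{p}$ and $L=k(a^{1/p})$, so the smooth-resolution route you mention is the one to use.

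\textbf{Gap in the second half.} For a $q$-primary class $\beta$ and a prime $\ell\neq q$, you need a splitting field of degree \emph{prime to $\ell$}. Every auxiliary extension in your dévissage is controlled only modulo $q$.

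\begin{enumerate1}
\item To make the composition factors of $A_{q}$ equal to $\ZZ/q$, $\mmu_{q}$ or infinitesimal, you must kill (or make unipotent) a Galois action through a subgroup of some $\GL_{r}(\FF_{q})$. The degree of that extension can be divisible by $\ell$. For instance, take $\ell\mid q-1$, $\mmu_{q}\subseteq k$, and $A_{q}$ the twist of $\ZZ/q$ by a character of order $\ell$. Any field over which this becomes $\ZZ/q$ contains the degree-$\ell$ field cut out by the character. A $q$-power extension of such a field therefore says nothing about $\ell$ and the index.
\item The fact you quote is backwards. Restriction to a field with pro-$q$ absolute Galois group is \emph{injective} on $q$-primary cohomology. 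It is fields with pro-$\ell$ absolute Galois group, $\ell\neq q$, that kill $q$-primary classes.
\item The inseparable step is asserted without argument.
\end{enumerate1}

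\textbf{The missing idea.} Apply your first half over a Sylow fixed field; this makes the primary decomposition and the whole dévissage unnecessary.
\begin{enumerate1}
\item Let $h$ be a normal splitting field of $\alpha$, and $h_{0}$ the separable closure of $k$ in $h$.
\item Let $P$ be an $\ell$-Sylow subgroup of $\aut(h/k)=\operatorname{Gal}(h_{0}/k)$. Set $k'=h_{0}^{P}$ if $\ell=\cha k$, and $k'=h^{P}$ otherwise.
\item In both cases $[k':k]$ is prime to $\ell$ and $[h:k']$ is a power of $\ell$.
\item Since $h$ splits $\alpha_{k'}$, your first half shows that the period of $\alpha_{k'}$ divides $[h:k']$. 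It also divides the period of $\alpha$, which is prime to $\ell$.
\item Hence $\alpha_{k'}=0$, and $k'$ is a splitting field of degree prime to $\ell$.
\end{enumerate1}
This is the paper's proof, and it needs nothing beyond the corestriction argument you already have.
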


\begin{proof}
	If $h/k$ is a splitting field, then $0=\operatorname{cor}_{h/k}\alpha_{h}=[h:k]\alpha\in\H^{2}(k, A)$, where $\operatorname{cor}\colon \H^{2}(h, A_{h}) \arr \H^{2}(k, A)$ is the corestriction operator. Hence the period divides the index. 
	
	Let $p$ be a prime which does not divide the period; we want to show that there exists a splitting field of degree prime with $p$. Let $h/k$ be a splitting field which is a normal extension, and let $h_{0}\subset h$ be the separable closure of $k$ in $h$. Let $G\subset\aut(h/k)=\operatorname{Gal}(h_{0}/k)$ be a $p$-Sylow subgroup, write $k'=h_{0}^{G}$ if $p=\cha k$ and $k'=h^{G}$ otherwise. In any case, $[h:k']$ is a power of $p$ and $[k':k]$ is prime with $p$.
	
	Since $h$ is a splitting field, the period of $\alpha_{k'}$ divides $[h:k']$; this implies that $\alpha_{k'}$ is trivial, since we are assuming that $p$ does not divide the period of $\alpha$. Hence, $k'/k$ is a splitting field for $\alpha$ of degree prime with $p$.
\end{proof}

A finite abelian gerbe $\cG$ over $k$ with band $A$ has an associated cohomology class $[\cG]\in\H^{2}(k,A)$, so we can talk about the period and index of $\cG$.

\section{The main theorems}\label{main-theorems}

Let us fix a finite diagonalizable group scheme $G$ over an algebraically closed field $K$, and a finite-dimensional representation $G \arr \GL_{K}(V)$. We want to give a criterion for $V$ to be neutral.

\begin{definition}
	A prime $p$ is \emph{critical} if there exists a $V$-form $\cV \to \cG$ such that $p$ divides the period of $\cG$.
\end{definition}

Thus, a representation is neutral if and only it has no critical primes.

Notice that every critical prime must divide $|G|$. 


If $p$ is a prime, denote by $G_{p}$ the largest subgroup scheme of $G$ whose order is a power of $p$. The character group $\widehat{G}_{p}$ is the $p$-primary part $\widehat{G}\{p\}$ of the character group $\widehat{G}$, and the restriction homomorphism $\widehat{G} \arr \widehat{G}_{p}$ induced by the embedding $G_{p} \subseteq G$ is the projection coming from the decomposition $\widehat{G} = \prod_{p}\widehat{G}\{p\}$. We will call $G_{p}$ the $p$-Sylow subgroup of $G$.

Let $\cV\to \cG$ be a $V$-form over a field $L$. By extending $K$, we may assume that $L \subseteq K$. Furthermore, we will fix an equivalence $(\alpha, \phi)$ of $(\cB_{K}G, [V/G])$ with $(\cG_{K}, \cV)$. This will allow us to associate with a line bundle $\cL$ on $\cG$, corresponding to a morphism $\cG \arr \cB_{L}\gm$, a character $G \arr \gm$, obtained by composing $\alpha\colon \cB_{K}G \simeq \cG_{K}$ with the morphism $\cG_{K} \arr\cB_{K}\gm$ coming from the pullback $\cL_{K}$ of $\cL$ to $\cG_{K}$. It is important to keep in mind that the character associated with $\cL$ depends on $\alpha$.

\begin{definition}
We say that the character $\chi\in \widehat{G}$ \emph{descends to $\cG$} if it comes from a line bundle on $\cG$.
\end{definition}

\begin{example}\label{example-descent}
Let $\omega\in \Omega_{V}$, and set $d \eqdef \dim V_{\chi}$ for $\chi \in \omega$. Then the character $\chi_{\omega} \eqdef d\sum_{\chi \in \omega}\chi$ descends to $\cG$, as it corresponds to the line bundle $\det \cV_{\omega}$ on $\cG$.
\end{example}


Here is our general criterion.

\begin{theorem}\label{generalcriterion}
Let $\cV\to \cG$ be a $V$-form over a field $L$, and $p$ a divisor of\/ $|G|$. Assume that there exists a finite extension $E/L$ of degree prime to $p$, such that the restrictions in $\widehat{G}_{p}$ of the characters of $G$ that descend to $\cG_{E}$ generate $\widehat{G}_{p}$.

Then $p$ does not divide the index of $\cG$.
\end{theorem}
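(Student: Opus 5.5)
Since $[E:L]$ is prime to $p$ and the index of $\cG$ divides $[E:L]$ times the index of $\cG_{E}$, we may replace $L$ by $E$. So we assume that the restrictions to $G_{p}$ of the characters descending to $\cG$ generate $\widehat{G}_{p}$, and we aim to show that $p$ does not divide the period of $\cG$. By \Cref{lemma:periodindex}, the index then has no factor $p$ either.

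First we replace $\cG$ by an abelian gerbe with diagonalizable band. As in the proof of \Cref{prop:blend}, the band $A$ of $\cG$ becomes diagonalizable after a finite Galois extension. We cannot make that extension of degree prime to $p$ directly, so we argue with the $p$-part instead. Take a $p$-Sylow subgroup of its Galois group and pass to the fixed field, which has degree prime to $p$ over $L$. It remains to kill the $p$-part of the class over an extension where $A$ is diagonalizable. Cleaner: line bundles on $\cG$ correspond to characters of the band that are Galois-invariant and whose obstruction vanishes. The hypothesis then forces the Galois action on $\widehat{A}_{p}$ to be trivial, because the descending characters are fixed by $\aut$-twisting and generate $\widehat{G}_{p}$. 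Hence $A_{p}$, the $p$-primary part of $A$, is already diagonalizable over $L$: $A_{p}\simeq \prod_{i}\mmu_{p^{e_{i}}}$.

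Next, decompose $[\cG]\in \H^{2}(L,A)=\bigoplus_{q}\H^{2}(L,A_{q})$ and look only at the $p$-component $[\cG]_{p}\in\H^{2}(L,A_{p})$. Its period is the $p$-part of the period of $\cG$. For a character $\chi\colon A\to\gm$, the pushforward $\chi_{*}[\cG]\in\H^{2}(L,\gm)=\mathrm{Br}(L)$ vanishes exactly when $\chi$ comes from a line bundle on $\cG$. Indeed, a line bundle of weight $\chi$ is the same as a $1$-morphism of gerbes $\cG\to\cB\gm$ inducing $\chi$ on bands, which exists if and only if $\chi_{*}[\cG]=0$. So every descending $\chi$ gives $\chi_{*}[\cG]=0$. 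Restricting to the $p$-part, $(\chi|_{A_{p}})_{*}[\cG]_{p}=0$, because $\chi_{*}$ factors through the projection to each primary component.

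Finally, choose descending characters $\chi_{1},\dots,\chi_{r}$ whose restrictions generate $\widehat{A}_{p}$. They define a homomorphism $A_{p}\to\gm^{r}$ which is a closed embedding, since the characters generate the character group. Its cokernel $T$ is a torus; it is split, being a quotient of $\gm^{r}$ by a diagonalizable group. The long exact sequence gives
\[
\H^{1}(L,T)\arr \H^{2}(L,A_{p})\arr \H^{2}(L,\gm^{r}).
\]
Here $\H^{1}(L,T)=0$ by Hilbert 90 because $T$ is split, so the second map is injective. The image of $[\cG]_{p}$ is $(\chi_{i*}[\cG])_{i}=0$, hence $[\cG]_{p}=0$ and $p$ does not divide the period.

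The main obstacle is the bookkeeping in the first step: showing that over $L$ (after the prime-to-$p$ extension) the band's $p$-part is diagonalizable, and that "$\chi$ descends" is equivalent to $\chi_{*}[\cG]=0$. The characters are defined via the chosen equivalence $\alpha$, so this identification requires care. I would handle it through the Galois-equivariance of the map from line bundles to characters, as in \Cref{prop:blend}.
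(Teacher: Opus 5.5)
Your proof is correct, but it takes a different route from the paper's.

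\textbf{The paper's route.} After passing to $E$, the paper splits the gerbe itself, $\cG=\cG'\times\cG''$, along $\Gamma=\Gamma'\times\Gamma''$. It uses \Cref{lemma:periodindex} in the middle of the argument, to find a prime-to-$p$ extension $F$ over which $\cG''$ becomes neutral. It then pulls $\cV$ back along the representable map $\cG'\arr\cG$ given by the trivial $\Gamma''$-torsor, obtaining a $V_{p}$-form. This reduces the theorem to a lemma for $p$-groups, which is proved by mapping $\cG\arr\cB_{L}\gm^{r}$ and using the injectivity of $\H^{2}(L,\mmu_{p^{d}})\arr\H^{2}(L,\gm)$.

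\textbf{Your route.} You stay in cohomology. You isolate the primary component $[\cG]_{p}\in\H^{2}(L,A_{p})$ and kill it with the exact sequence $0\arr A_{p}\arr\gm^{r}\arr T\arr 0$ and Hilbert 90 for the split torus $T$. You use \Cref{lemma:periodindex} only at the very end, to pass from period to index.

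\textbf{What each buys.} Your route dispenses with the auxiliary field $F$ and with the reduction to $G_{p}$ and $V_{p}$-forms. The torus-cokernel argument also works for any finite family of descending characters whose restrictions generate $\widehat{A}_{p}$. By contrast, the paper's coordinatewise identification of the band with $\mmu_{p^{d_{1}}}\times\dots\times\mmu_{p^{d_{r}}}$ tacitly needs the chosen characters to form a basis. That is harmless there, since in the $p$-group case every character descends. In exchange, the paper's route stays within the language of $V$-forms. It also isolates the clean intermediate statement that a gerbe banded by a $p$-group whose descending characters generate is neutral.

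\textbf{Two things to tidy up.}

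First, delete the Sylow-subgroup detour in your second paragraph; it is abandoned mid-stream. In fact no separate argument for the diagonalizability of $A_{p}$ is needed. Line bundles on $\cG$ give $L$-rational homomorphisms $A\arr\gm$. Once their restrictions generate $\widehat{A}_{p}$, the map $A_{p}\arr\gm^{r}$ is a closed embedding defined over $L$. So $A_{p}$ is diagonalizable and $T$ is split automatically.

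Second, $\chi_{*}$ does not literally factor through a single primary component. Let $A''$ be the prime-to-$p$ part of $A$ and $[\cG]''\in\H^{2}(L,A'')$ the corresponding component of $[\cG]$. The correct justification is that $\chi_{*}[\cG]=(\chi|_{A_{p}})_{*}[\cG]_{p}+(\chi|_{A''})_{*}[\cG]''$ in $\mathrm{Br}(L)$. A sum of a $p$-power torsion class and a prime-to-$p$ torsion class vanishes only if both summands do.
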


In order to apply the theorem we need criteria to check when a character descends, after passing to an extension of degree prime to $p$.

\begin{theorem}\label{splitting}
Let $\cV\to \cG$ be a $V$-form over a field $L$, $p$ a divisor of\/ $|G|$, and $\omega$ in $\Omega_{V}$.

\begin{enumerate1}

\item If $p$ does not divide $|\omega|$, there exists a finite separable extension $E/L$ of degree prime to $p$, such that one of the characters in $\omega$ descends to $\cG_{E}$.

\item If $p > |\omega|$, there exists a finite Galois extension $E/L$ of degree prime to $p$, such that all the characters in $\omega$ descend to $\cG_{E}$.

\end{enumerate1}
\end{theorem}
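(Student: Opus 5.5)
The plan is to make the Galois action on characters explicit and reduce descent of a character to the vanishing of a Brauer class. Let $A$ be the band of $\cG$. As in the proof of \Cref{prop:blend}, there is a finite Galois extension $L'/L$ over which $A$ becomes diagonalizable, and the fixed equivalence $\alpha$ identifies $\widehat{A}_{K}$ with $\widehat{G}$. The absolute Galois group $\Gamma$ of $L$ then acts on $\widehat{G}$ through group automorphisms. By \Cref{prop:blend}, over $L'$ the bundle $\cV$ splits into eigensheaves $\cV_{\chi}$, and $\sigma\in\Gamma$ carries $\cV_{\chi}$ to $\cV_{\sigma\chi}$. Hence $\dim V_{\chi}=\dim V_{\sigma\chi}$, so $\Gamma$ acts through $\aut_{V}G$. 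It follows that $\Gamma$ preserves each $\omega\in\Omega_{V}$, and each $\omega$ is a disjoint union of $\Gamma$-orbits.

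The field $E$ is then chosen as follows.
\begin{enumerate1}
\item If $p\nmid|\omega|$, some $\Gamma$-orbit in $\omega$ has size prime to $p$. Pick $\chi$ in that orbit and let $E$ be the fixed field of its stabilizer. Then $E/L$ is separable of degree prime to $p$, and $\chi$ is $\Gamma_{E}$-invariant.
\item If $p>|\omega|$, the action of $\Gamma$ on $\omega$ factors through a subgroup of the symmetric group on $\omega$, whose order divides $|\omega|!$ and is therefore prime to $p$. Let $E$ be the fixed field of the kernel of this action. Then $E/L$ is Galois of degree prime to $p$, and every $\chi\in\omega$ is $\Gamma_{E}$-invariant.
\end{enumerate1}

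It remains to show that a $\Gamma_{E}$-invariant character $\chi\in\omega$ descends, possibly after a further extension of degree prime to $p$ (Galois in case (2)). Since $\chi$ is Galois-invariant, it is defined over $E$ as a homomorphism $A_{E}\to\gm$. Pushing the gerbe forward gives $\beta_{\chi}\eqdef\chi_{*}[\cG_{E}]\in\H^{2}(E,\gm)$, and $\chi$ descends to $\cG_{E}$ if and only if $\beta_{\chi}=0$. The period of $\beta_{\chi}$ divides the order of $\chi$. By Galois descent, the eigensheaf $\cV_{\chi}$ is defined over $E$. It is a $\chi$-twisted vector bundle of rank $d=\dim V_{\chi}$, so the index of $\beta_{\chi}$ divides $d$.

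Write $\beta_{\chi}=\beta'+\beta_{p}$, where $\beta'$ has period prime to $p$ and $\beta_{p}$ is the $p$-primary part. By \Cref{lemma:periodindex}, $\beta'$ is split by an extension of degree prime to $p$. In case (2) one takes its Galois closure, and does this simultaneously for all $\chi\in\omega$; the Galois closure still has degree prime to $p$ because the splitting extensions can be taken separable.

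The main obstacle is the $p$-primary part $\beta_{p}$, which cannot be killed by extensions of degree prime to $p$, so it must be shown to vanish outright. My first attempt is to exploit \Cref{example-descent} together with the Galois symmetry. The character $d\sum_{\chi'\in\omega}\chi'$ descends, and the orbit structure relates the classes $\beta_{\chi'}$ to one another via restriction and corestriction; combined with the index bound $d$, this should show that $\beta_{\chi}$ is annihilated by an integer prime to $p$. If that fails, I would decompose $\cV_{\chi}$ further via the $p$-Sylow $A_{p}$ and argue on the $p$-part of the gerbe directly.
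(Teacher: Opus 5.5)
\textbf{There is a genuine gap, and it cannot be closed: as written, the statement is false when $p \mid \dim V_\chi$.}

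Your choice of $E$ is correct, and so is everything up to the descent of the eigensheaf $\cV_\chi$ to $\cG_E$. This is the paper's argument in Galois-theoretic language. The paper takes $\spec E$ to be a connected component of the degree-$|\omega|$ finite étale cover parametrizing the eigen-subbundles $\cW\subseteq\cV$ of type $\omega$ (resp.\ of the $\rS_{|\omega|}$-torsor of orderings of them). That is exactly the fixed field of the stabilizer of $\chi$ (resp.\ of the kernel of the action on $\omega$).

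The gap is where you locate it, in the $p$-primary part of $\beta_\chi$. With ``descends'' meaning ``comes from a line bundle'', here is a counterexample:
\begin{itemize}
\item Take $K=\mathbb{C}$, $G=\mmu_2$, $\chi$ the nontrivial character, $V=\chi\oplus\chi$ and $L=\mathbb{R}$.
\item Let $\cG$ be the non-neutral $\mmu_2$-gerbe; its class maps to $[\mathbb{H}]$ under $\H^2(\mathbb{R},\mmu_2)\hookrightarrow\mathrm{Br}(\mathbb{R})$.
\item Let $\cV$ be the rank-$2$ bundle on which the band acts through $\chi$, coming from the simple $\mathbb{H}$-module.
\end{itemize}
This is a $V$-form. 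Since $\aut\mmu_2$ is trivial, $\omega=\{\chi\}$, so $p=2$ satisfies both $p\nmid|\omega|$ and $p>|\omega|$. The only odd-degree extension of $\mathbb{R}$ is $\mathbb{R}$ itself, and $\beta_\chi=\beta_p=[\mathbb{H}]\neq0$. Your intended mechanism has nothing to work with here: the Galois action on $\omega$ is trivial, and $d\sum_{\chi'\in\omega}\chi'=2\chi=0$.

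\textbf{What the argument actually proves, and how to repair the statement.} The construction (yours and the paper's) shows that $\cV_\chi$ is defined over $E$ as a rank-$d$ bundle on which the band acts through $\chi$. Taking $\det\cV_\chi$, this shows that $d\chi$ descends. The paper's last sentence silently treats the rank-$\dim V_\chi$ subbundle $\cW$ as if it were a line bundle. The weaker conclusion is all the later corollaries need: they assume $p\nmid\dim V_\chi$ (automatic when $p>\dim V$), and then $d\chi$ and $\chi$ restrict to generators of the same subgroup of $\widehat{G}_p$. So the right repair is to change the conclusion, not to try to kill $\beta_p$.

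If you insist on $\chi$ itself, you need the extra hypothesis $p\nmid d$. In case (1) your Brauer-class argument then works: $d\beta_\chi=0$ makes the period prime to $p$, and \Cref{lemma:periodindex} applies. In case (2), however, your claim is false that the Galois closure of a separable extension of degree prime to $p$ still has degree prime to $p$. For example, $\mathbb{Q}(\sqrt[3]{2})$ has degree $3$, but its Galois closure has degree $6$, so it fails for $p=2$. That step would need a different argument.
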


\begin{corollary}
Let $p$ be a prime divisor of\/ $|G|$ with $p > \dim_{K}V$. If\/ $V$ is faithful when restricted to $G_{p}$, then $p$ is not critical.
\end{corollary}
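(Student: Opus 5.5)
The plan is to combine \Cref{splitting}(2) with \Cref{generalcriterion}, and then pass from the index to the period using \Cref{lemma:periodindex}.

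Fix a $V$-form $\cV \arr \cG$ over a field $L$. We must show that $p$ does not divide the period of $\cG$. By \Cref{lemma:periodindex}, the period and the index of $\cG$ have the same prime factors, so it suffices to show that $p$ does not divide the index. By \Cref{generalcriterion}, this in turn reduces to finding an extension $E/L$ of degree prime to $p$ such that the restrictions to $\widehat{G}_{p}$ of the characters descending to $\cG_{E}$ generate $\widehat{G}_{p}$.

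To produce $E$, consider the finitely many orbits $\omega \in \Omega_{V}$ with $V_{\omega} \neq 0$. For each such $\omega$, every $\chi \in \omega$ has $\dim V_{\chi} = \dim V_{\omega}/|\omega| \geq 1$. Hence $|\omega| \leq \dim V_{\omega} \leq \dim_{K} V < p$.

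By \Cref{splitting}(2), there is a finite Galois extension $E_{\omega}/L$ of degree prime to $p$ over which all characters in $\omega$ descend. Let $E$ be the compositum of the $E_{\omega}$ inside $K$. This is a Galois extension whose Galois group embeds in $\prod_{\omega} \operatorname{Gal}(E_{\omega}/L)$, so $[E:L]$ is prime to $p$. Descent of a character is preserved under further base change, by pulling back the line bundle. Therefore every character $\chi$ with $V_{\chi} \neq 0$ descends to $\cG_{E}$.

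It remains to check generation. The characters $\chi$ with $V_{\chi} \neq 0$ are exactly the weights of $V$. Restricted to $G_{p}$, they are the weights of $V|_{G_{p}}$. Faithfulness of $V|_{G_{p}}$ means the intersection of the kernels of these weights in $G_{p}$ is trivial. For a diagonalizable group scheme, by Cartier duality this is equivalent to the restricted weights generating $\widehat{G}_{p}$: if they generated a proper subgroup $H$, the nontrivial subgroup scheme dual to $\widehat{G}_{p}/H$ would act trivially. Hence \Cref{generalcriterion} applies, and $p$ does not divide the index, hence not the period, of $\cG$. Since $\cV \arr \cG$ was arbitrary, $p$ is not critical.

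The step needing the most care is the bound $|\omega| < p$, which is only needed for orbits actually occurring in $V$. Orbits with $V_{\omega} = 0$ may be large, but they are irrelevant, because their characters play no role in the faithfulness argument. One should also make sure that the compositum of Galois extensions of prime-to-$p$ degree still has prime-to-$p$ degree; this holds because the Galois group of the compositum embeds in the product of the individual Galois groups.
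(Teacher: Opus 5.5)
Your proof is correct and follows essentially the same route as the paper, which states this corollary without proof as an immediate consequence of \Cref{splitting}(2) and \Cref{generalcriterion}, with ``period divides index'' from \Cref{lemma:periodindex}. The ingredients match: the bound $|\omega| \le \dim_{K} V < p$ for orbits occurring in $V$, descent of all weights over a prime-to-$p$ Galois compositum, and generation of $\widehat{G}_{p}$ by the restricted weights via faithfulness on $G_{p}$.
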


\begin{corollary}
If $V$ is faithful, and $|G|$ is prime with $(\dim_{K}V)!$, then $V$ is neutral.
\end{corollary}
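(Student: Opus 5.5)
We deduce the statement from the previous corollary applied one prime at a time. Recall that $V$ is neutral exactly when it has no critical primes, and that every critical prime divides $|G|$. So it is enough to fix a prime $p$ dividing $|G|$ and show that $p$ is not critical.

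Since $|G|$ is prime to $(\dim_{K}V)!$, the prime $p$ divides none of $1, 2, \dots, \dim_{K}V$. Hence $p > \dim_{K}V$. It remains to check the second hypothesis of the previous corollary, namely that $V$ restricted to the $p$-Sylow subgroup $G_{p}$ is faithful. This holds because $G_{p} \subseteq G$ is a closed subgroup scheme and $V$ is faithful on $G$, so the composite $G_{p} \subseteq G \arr \GL_{K}(V)$ has trivial kernel.

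The previous corollary now says that $p$ is not critical. Since $p$ was an arbitrary prime divisor of $|G|$, there are no critical primes, and $V$ is neutral.

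There is no real obstacle here; the substance lies in the previous corollary. If one wants to see why that corollary holds, the chain is as follows. Every orbit $\omega \in \Omega_{V}$ with $V_{\omega} \neq 0$ has $|\omega| \le \dim V < p$. Part (2) of \Cref{splitting} then shows that, after a prime-to-$p$ Galois extension, every character occurring in $V$ descends. Taking a compositum of these extensions keeps the degree prime to $p$. Faithfulness of $V$ on $G_{p}$ means that the restrictions of the characters occurring in $V$ generate $\widehat{G}_{p}$. Finally, \Cref{generalcriterion} shows that $p$ does not divide the index, hence by \Cref{lemma:periodindex} it does not divide the period. For the present statement we simply invoke the corollary.
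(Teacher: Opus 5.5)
Your proof is correct and is exactly the intended argument. The paper states this corollary without proof as an immediate consequence of the preceding one: every prime $p \mid |G|$ satisfies $p \nmid (\dim_{K}V)!$, hence $p > \dim_{K}V$, and faithfulness of $V$ passes to $G_{p}$. Your sketch of why the preceding corollary holds (orbits of size at most $\dim_{K}V < p$, then \Cref{splitting}(2), \Cref{generalcriterion} and \Cref{lemma:periodindex}) also matches the paper's chain of results.
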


Together with \Cref{R<->neutral} and \cite[Lemma 6.22]{giulio-angelo-arithmetic}, this gives an alternative proof of \cite[Theorem 6.19]{giulio-angelo-arithmetic}.

The problem with applying \Cref{splitting} is that in case $p \leq |\omega|$, we don't have a way of knowing which of the characters in $\omega$ descends. The characters in $\widehat{G}$ whose image in $\widehat{G}_{p}/p\widehat{G}_{p}$ is zero do not contribute. Thus, if $\chi_{1}$, \dots,~$\chi_{r}$ are characters whose restriction to $\widehat{G}_{p}$ is not in $p\widehat{G}_{p}$, whether their images in $\widehat{G}_{p}$ generate only depends on the lines that their images in $\widehat{G}_{p}/p\widehat{G}_{p}$ generate (here we are thinking of $\widehat{G}_{p}/p\widehat{G}_{p}$ as a vector space on $\FF_{p}$). Denote by $P$ the projective space of lines in $\widehat{G}_{p}/p\widehat{G}_{p}$; if the action of $\aut_{V}G$ on $P$ is trivial, then whether the images in $\widehat{G}_{p}$ of a sequence $\chi_{1}$, \dots,~$\chi_{r}$ of characters  generate only depends on the classes of the $\chi_{i}$ in $\Omega_{V}$. Hence we get the following.

\begin{corollary}
Assume the following two conditions.
\begin{enumerate1}

\item The action of $\aut_{V}G$ on the set of lines in $\widehat{G}_{p}/p\widehat{G}_{p}$ is trivial.

\item Let $S$ be the set of characters $\chi \in \widehat{G}$ such that $p$ does not divide $\dim V_{\chi}$, and such that either

\begin{enumeratea}

\item the restriction to $G_{p}$ of $\sum_{\chi' \equiv \chi} \chi'$ is primitive, or

\item the order of the orbit of $\chi$ is not divisible by $p$.

\end{enumeratea}

Then the restrictions of the elements of $S$ to $G_{p}$ generate $\widehat{G}_{p}$.
\end{enumerate1}

Then $p$ is not critical.
\end{corollary}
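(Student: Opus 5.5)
The plan is to reduce to \Cref{generalcriterion} and \Cref{lemma:periodindex}. Fix a $V$-form $\cV \arr \cG$ over $L$; we must show that $p$ does not divide the period of $\cG$. By \Cref{lemma:periodindex} the period and the index have the same prime factors, so it suffices to show that $p$ does not divide the index. \Cref{generalcriterion} gives exactly this, provided we find an extension $E/L$ of degree prime to $p$ such that the characters descending to $\cG_{E}$ restrict to a generating set of $\widehat{G}_{p}$. So the whole task is to construct such an $E$ using hypotheses (1) and (2).

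The construction goes orbit by orbit. The set $S$ is a union of orbits $\omega \in \Omega_{V}$, since both conditions (a) and (b), and the condition $p \nmid \dim V_{\chi}$, depend only on the orbit. For each orbit $\omega \subseteq S$ we produce a descending character whose restriction to $G_{p}$ generates the same line as some element of $\omega$. There are two cases.

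\textbf{Case (b).} If $p \nmid |\omega|$, \Cref{splitting}(1) gives a separable extension of degree prime to $p$ over which some $\chi' \in \omega$ descends.

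\textbf{Case (a).} By \Cref{example-descent}, the character $d\sum_{\chi' \in \omega}\chi'$ descends already over $L$, where $d = \dim V_{\chi}$. Its restriction to $G_{p}$ is $d$ times a primitive element. Since $p \nmid d$, this restriction is again primitive, so it spans the same line in $\widehat{G}_{p}/p\widehat{G}_{p}$.

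Taking the compositum $E$ of the finitely many extensions from case (b), we get $[E:L]$ prime to $p$; we can arrange this by successively base changing, since descent persists under further extension. After this step, for every orbit $\omega \subseteq S$ we have a descending character over $E$.

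Next comes the linear-algebra step, which uses hypothesis (1). For a finite abelian $p$-group, elements generate if and only if their images generate $\widehat{G}_{p}/p\widehat{G}_{p}$, by Nakayama/Frattini. That in turn depends only on the lines spanned by those images in which they are nonzero. By (1), $\aut_{V}G$ fixes every line of $\widehat{G}_{p}/p\widehat{G}_{p}$, so all elements of an orbit $\omega$ span the same line modulo $p$, or are all zero. Hence the descending character obtained for $\omega$ spans the same line as every element of $\omega$. In case (a) we should also check this against the primitive sum: each $\chi' \in \omega$ has image on the common line $\ell$, so the sum lies on $\ell$, and it is nonzero because it is primitive. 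Consequently the images of the descending characters span the same subspace of $\widehat{G}_{p}/p\widehat{G}_{p}$ as the images of $S$, which is everything by (2). Therefore they generate $\widehat{G}_{p}$, and \Cref{generalcriterion} concludes.

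The main obstacle I expect is bookkeeping in case (a). The element $\sum_{\chi'\equiv\chi}\chi'$ in the statement should be read as the sum over the orbit of $\chi$, matching $\chi_{\omega}$ up to the factor $d$, and we need $p \nmid d$ to preserve primitivity; the definition of $S$ supplies exactly this. A second point to verify is that "descends to $\cG_{E}$" is compatible with the fixed equivalence $\alpha$ after base change. This is harmless, because the assignment of characters to line bundles is functorial in field extension.
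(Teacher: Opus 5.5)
Your overall route is the paper's own (the paragraph preceding the corollary). You reduce to \Cref{generalcriterion}, obtain descending characters orbit by orbit from \Cref{example-descent} and \Cref{splitting}(1), and use hypothesis (1) so that it does not matter which element of an orbit descends. Your added details are sound. Doing the case (b) extensions by successive prime-to-$p$ base changes is necessary, since a compositum of independently chosen prime-to-$p$ extensions can have degree divisible by $p$. The Frattini argument and the check in case (a) that the orbit sum lies on the common line are also correct.

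One step in case (b) does not hold as you justify it. You take \Cref{splitting}(1) at face value, namely that some $\chi'\in\omega$ itself descends, and so you say $p\nmid\dim V_{\chi}$ is needed only in case (a). The proof of \Cref{splitting} actually produces, over a component $\spec E$ of $\Gamma$, a subbundle $\cW\subseteq\cV_{E}$ of rank $d=\dim V_{\chi'}$ on which $G$ acts through $\chi'$. The line bundle this yields is $\det\cW$, whose character is $d\chi'$.

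For $d>1$, $\chi'$ itself need not descend over any extension of degree prime to $p$. Take $G=\mmu_{p}$ and $V=\chi^{\oplus p}$ with $\chi$ faithful, so $\omega=\{\chi\}$. Let $\cG$ be the $\mmu_{p}$-gerbe of $\mathrm{SL}_{p}$-liftings of the $\mathrm{PGL}_{p}$-torsor of a central division algebra $D$ of degree $p$ over $L$; its tautological rank-$p$ bundle $\cW$ is a $V$-form. If $\cL$ were a line bundle of weight $\chi$ on $\cG_{E}$, then $\cW\otimes\cL^{\vee}$ would descend to an $E$-vector space $F$ with $D_{E}\simeq\operatorname{End}_{E}(F)$. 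That is impossible when $p\nmid[E:L]$.

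The repair is immediate. In case (b), use the descending character $d\chi'$ together with $p\nmid d$, exactly as in case (a). Since $d$ is a unit modulo every power of $p$, $d\chi'$ and $\chi'$ restrict to elements generating the same subgroup of $\widehat{G}_{p}$, hence the same line in $\widehat{G}_{p}/p\widehat{G}_{p}$. This is the real reason the condition $p\nmid\dim V_{\chi}$ appears in the definition of $S$ for both (a) and (b). With this change your argument is complete.
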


When $G_{p}$ is cyclic, the result has a particularly simple form, given that the first condition of the corollary above is automatically satisfied, and that to generate $G_{p}$ we only need a faithful character.

\begin{corollary}\label{cyclic-general}
Suppose that $G_{p}$ is cyclic. Assume that there exists a faithful character $\chi \in \widehat{G}$ such that $p$ does not divide $\dim_{K}V_{\chi}$, and either 
\begin{enumeratea}

\item the restriction of $\sum_{\chi' \equiv \chi} \chi'$ to $G_{p}$ is faithful, or

\item the order of the orbit of $\chi$ is not divisible by $p$.

\end{enumeratea}

Then $p$ is not critical.

\end{corollary}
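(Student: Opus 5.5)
The plan is to deduce the statement from \Cref{generalcriterion} together with \Cref{splitting}, and then from \Cref{lemma:periodindex}. Fix a $V$-form $\cV \arr \cG$ over a field $L$, with an equivalence $(\alpha,\phi)$ as in the setup. It suffices to find a finite extension $E/L$ of degree prime to $p$ such that some character of $G$ descending to $\cG_{E}$ restricts to a generator of $\widehat{G}_{p}$. Since $G_{p}$ is cyclic, $\widehat{G}_{p}$ is cyclic, and a single character restricting to a faithful character of $G_{p}$ already generates it. Then \Cref{generalcriterion} gives that $p$ does not divide the index of $\cG$. By \Cref{lemma:periodindex}, the period and the index have the same prime factors, so $p$ does not divide the period. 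As $\cV \arr \cG$ was arbitrary, $p$ is not critical.

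I produce the required descending character separately in the two cases. Let $\omega$ be the orbit of $\chi$ and set $d = \dim_{K} V_{\chi}$, which is prime to $p$ by hypothesis.

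In case (a), I use \Cref{example-descent}: the character $\chi_{\omega} = d\sum_{\chi'\in\omega}\chi'$ descends to $\cG$ itself, so I can take $E = L$. Its restriction to $G_{p}$ is $d$ times the restriction of $\sum_{\chi'\equiv\chi}\chi'$, which is faithful by assumption. Multiplication by $d$ is an automorphism of the $p$-group $\widehat{G}_{p}$ because $p\nmid d$. Hence the restriction of $\chi_{\omega}$ generates $\widehat{G}_{p}$.

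In case (b), $p\nmid|\omega|$, so \Cref{splitting}(1) gives a finite separable extension $E/L$ of degree prime to $p$ over which some $\chi'\in\omega$ descends to $\cG_{E}$. I must check that $\chi'$ restricts to a faithful character of $G_{p}$. Write $\chi' = \chi\circ\psi$ with $\psi\in\aut_{V}G$. Any automorphism $\psi$ of $G$ preserves the characteristic subgroup $G_{p}$, since it is the largest subgroup scheme of $p$-power order. So $\chi'|_{G_{p}} = \chi|_{G_{p}}\circ\psi|_{G_{p}}$. This is faithful, because $\chi$ is faithful, hence injective on $G_{p}$, and $\psi|_{G_{p}}$ is an automorphism.

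The point needing care is that the orbit is computed in $\widehat{G}$ under $\aut_{V}G$, while the generation condition concerns $\widehat{G}_{p}$. So one must check that the automorphisms preserve $G_{p}$ and that faithfulness on $G_{p}$ is transported correctly. Note that $d$ only enters case (a); in case (b) the character descends directly. I expect no real obstacle beyond this bookkeeping; the substance lies in \Cref{generalcriterion} and \Cref{splitting}, which I am allowed to assume.
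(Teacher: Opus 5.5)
Your route is the paper's: the corollary is the cyclic case of the unnumbered corollary before it, which combines \Cref{generalcriterion}, \Cref{splitting}, \Cref{example-descent} and \Cref{lemma:periodindex}. Your case (a) is correct. The gap is in case (b), and your closing remark that $d$ ``only enters case (a)'' is the symptom.

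You read \Cref{splitting}(1) as saying that some $\chi'\in\omega$ itself comes from a line bundle on $\cG_E$. What its proof produces is an $E$-point of the finite \'etale cover $\Gamma$, with $p\nmid[E:L]$. That is a rank-$d$ subbundle $\cW\subseteq\cV_E$, locally of the form $[V_{\chi'}/G]$, where $d=\dim_K V_{\chi'}=\dim_K V_\chi$. The band acts on $\cW$ through $\chi'$, but the only line bundle you can extract is $\det\cW$, whose character is $d\chi'$, not $\chi'$.

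Taken literally, the statement you used fails, and so does the corollary if $p\nmid\dim_K V_\chi$ is dropped in case (b). Let $G=\mmu_2$, $V=\chi\oplus\chi$ with $\chi$ nontrivial (hence faithful), and $p=2$. Since $\aut G$ is trivial, $\omega=\{\chi\}$ has odd order. Over $L=\mathbb{R}$, take the $\mmu_2$-gerbe $\cG$ whose class in $\H^2(\mathbb{R},\mmu_2)=\mathrm{Br}(\mathbb{R})[2]$ is that of the Hamilton quaternions. It carries a locally free twisted sheaf of rank $2$, which is a $V$-form. The gerbe $\cG$ is not neutral, and $\mathbb{R}$ is its own only odd-degree extension, so $\chi$ descends to no $\cG_E$ with $[E:L]$ odd. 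Hence $2$ is critical, consistent with \Cref{easy-cyclic} since $\dim V-\dim V^{H_2}=2$. An argument that never uses $p\nmid d$ in case (b) therefore cannot be right.

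The repair is the trick you already used in (a). Over the field $E$ given by \Cref{splitting}(1), the line bundle $\det\cW$ shows that $d\chi'$ descends to $\cG_E$, where $\chi'=\chi\circ\psi$ with $\psi\in\aut_V G$. You have shown that $\chi'|_{G_p}$ generates the cyclic group $\widehat{G}_p$. Since $p\nmid d$, multiplication by $d$ is an automorphism of $\widehat{G}_p$, so $d\chi'|_{G_p}$ is also a generator. With this change the rest of your argument goes through: stability of $G_p$ under $\psi$, then \Cref{generalcriterion}, then \Cref{lemma:periodindex}. This is exactly why the paper's preceding corollary imposes $p\nmid\dim V_\chi$ on both alternatives.
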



From this we get the following simple criterion for $V$ to be neutral when $G$ is cyclic.

\begin{corollary}\label{easy-cyclic}
Assume that $G$ is cyclic. For each prime divisor $p$ of\/ $|G|$, denote by $H_{p} \subseteq G$ the only subgroup scheme of $G$ that is isomorphic to $\mmu_{p}$. If the difference $\dim_{K}V - \dim_{K}V^{H_{p}}$ is not divisible by $p$ for every prime divisor $p$ of\/ $|G|$, then $V$ is neutral.
\end{corollary}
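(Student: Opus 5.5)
We deduce \Cref{easy-cyclic} from \Cref{cyclic-general}, applied separately to each prime $p$ dividing $|G|$. A representation is neutral exactly when it has no critical primes, and every critical prime divides $|G|$. So it suffices to show that, for each such $p$, \Cref{cyclic-general} produces a suitable faithful character. Write $G = \mmu_{n}$, so $\widehat{G} = \ZZ/n$ and $G_{p}$ is cyclic. The subgroup $H_{p}\simeq \mmu_{p}$ is the unique subgroup of order $p$, and a character $\chi$ is trivial on $H_{p}$ exactly when $\chi \in p\widehat{G}$. Hence
\[
V^{H_{p}} = \bigoplus_{\chi \in p\widehat{G}} V_{\chi},
\qquad
\dim V - \dim V^{H_{p}} = \sum_{\chi \notin p\widehat{G}} \dim V_{\chi}.
\]
Note that $\chi \notin p\widehat{G}$ is equivalent to the restriction of $\chi$ to $G_{p}$ being faithful, which in turn is equivalent to $\chi|_{H_{p}}$ being nontrivial.

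The set $\widehat{G}\smallsetminus p\widehat{G}$ is stable under $\aut_{V}G$, since automorphisms preserve the subgroup $p\widehat{G}$. It is therefore a union of orbits $\omega\in\Omega_{V}$, and $\dim V_{\chi}$ is constant on each orbit. The displayed sum thus equals $\sum_{\omega} |\omega|\, d_{\omega}$, taken over orbits outside $p\widehat{G}$, where $d_{\omega}$ is the common dimension. Since this sum is prime to $p$ by hypothesis, some orbit $\omega$ has $p\nmid |\omega|$ and $p \nmid d_{\omega}$. Pick $\chi\in\omega$. Then $p\nmid \dim V_{\chi}$, and condition (b) of \Cref{cyclic-general} holds.

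One subtlety concerns the meaning of ``faithful'' in \Cref{cyclic-general}. Since that corollary only concerns the prime $p$ and the group $G_{p}$, the relevant condition is faithfulness on $G_{p}$, and we have $\chi \notin p\widehat{G}$, i.e. $\chi|_{G_{p}}$ is faithful. If the corollary really demands that $\chi$ be faithful on all of $G$, we would instead argue directly from \Cref{generalcriterion} and part~(1) of \Cref{splitting}. That part gives a prime-to-$p$ extension over which some character $\chi'\in\omega$ descends. Every element of $\omega$ lies outside $p\widehat{G}$, so $\chi'|_{G_{p}}$ generates the cyclic group $\widehat{G}_{p}$. Then \Cref{generalcriterion} shows that $p$ does not divide the index of $\cG$, hence by \Cref{lemma:periodindex} not its period, so $p$ is not critical. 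This direct route is robust, so I would present it as the main argument.

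Finally, since no prime is critical, the period of every $V$-form gerbe is $1$, so each such gerbe is neutral and $V$ is neutral. The main (mild) obstacle is the orbit-counting step above: one must check that $\aut_{V}G$ preserves $p\widehat{G}$ (clear, as automorphisms preserve multiples of $p$) and that orbit sizes and dimensions interact multiplicatively so that the hypothesis forces a good orbit.
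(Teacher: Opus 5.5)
Your first argument is the paper's proof. The paper's one-line justification (a character of $G_p$ is faithful iff it is nontrivial on $H_p$) is exactly what lets one invoke \Cref{cyclic-general}(b); the orbit count $\dim V-\dim V^{H_p}=\sum_{\omega}|\omega|\,d_\omega$, which you carry out correctly, is left implicit there. That same line indicates that ``faithful'' in \Cref{cyclic-general} means faithful on $G_p$, so your worry about its meaning is unfounded and this route is complete.

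The ``direct route'' you propose to present as the main argument is \emph{not} robust, however. It relies on \Cref{splitting}(1) read literally, and that reading overstates what the proof gives. Over $E$ the proof produces a subbundle $\cW\subseteq\cV_E$ of rank $d=\dim V_{\chi'}$ that is geometrically $V_{\chi'}$. The line bundle you actually obtain is $\det\cW$, whose character is $d\chi'$, not $\chi'$ (compare \Cref{example-descent}).

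The literal statement is in fact false. Take $G=\mmu_2$ and $V=\rho\oplus\rho$ with $\rho$ nontrivial. A nonsplit quaternion algebra over $L$ gives a $V$-form: the gerbe of $\mathrm{SL}_2$-liftings of the corresponding $\mathrm{PGL}_2$-torsor, with the standard rank-$2$ bundle. Here $\omega=\{\rho\}$ has order $1$. Yet $\rho$ descends over no odd-degree $E$: $\H^2(E,\mmu_2)\to\H^2(E,\gm)$ is injective and the quaternion algebra stays nonsplit over $E$.

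Your direct route never uses $p\nmid d_\omega$, so it would ``prove'' that $\rho\oplus\rho$ is neutral, which it is not. The repair uses what you already established. What descends to $\cG_E$ is $d_\omega\chi'$, and since $p\nmid d_\omega$ its restriction to $G_p$ still generates $\widehat{G}_p$, so \Cref{generalcriterion} applies. This is precisely why the hypothesis $p\nmid\dim_K V_\chi$ appears in \Cref{cyclic-general}.
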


\begin{proof}
The point here is that a character of $G_{p}$ is faithful if and only if its restriction to $H_{p}$ is nontrivial.
\end{proof}

\section{The proofs of the theorems}\label{proofs}

\begin{proof}[Proof of \Cref{generalcriterion}]

Since $E/L$ has degree prime with $p$, then $p$ divides the index of $\cG$ if and only if it divides the index of $\cG_{E}$. Hence, up to replacing $L$ with $E$, we may assume that the restrictions in $\widehat{G}_{p}$ of the characters of $G$ that descend to $\cG$ generate $\widehat{G}_{p}$.

If $\Gamma \arr \spec L$ is the band of $\cG$, we have a decomposition $\Gamma = \Gamma' \times \Gamma''$, where $\Gamma', \Gamma''$ are group schemes of multiplicative type, such that the order of $\Gamma'$ is a power of $p$, and the order of $\Gamma''$ is prime with $p$. This gives a decomposition $\cG = \cG' \times \cG''$, where $\Gamma',\Gamma'$ are the bands of $\cG', \cG''$ respectively. It follows from \Cref{lemma:periodindex} that $\cG''$ has a splitting field $F$ of degree prime to $p$. 

If we show that $\cG'_{F}$ is neutral, we can conclude that $F/L$ is a splitting field for $\cG$, and hence, that $p$ does not divide the index of $\cG$. Thus, we may base change to $F$, and assume that $\cG'' = \cB_{L}\Gamma''$. 

Denote by $V_{p}$ the restriction of $V$ to $G_{p}$; the morphism $\spec L \arr \cB_{L}\Gamma''$ corresponding to the trivial $\Gamma''$-torsor yields a representable morphism $\cG' \arr \cG$. The pullback $\cV'$ of $\cV$ is a $V_{p}$-form. Hence we can replace $G$ with $G_{p}$, and prove the following particular case of the theorem.

\begin{lemma}
Assume that $G$ is a $p$-group, and that the characters of $G$ that descend to $\cG$ generate $\widehat{G}$. Then $\cG$ is neutral.
\end{lemma}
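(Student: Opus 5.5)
The approach is to use the line bundles to embed the band of $\cG$ into a split torus, and then show that the class of $\cG$ dies by Hilbert~90. Let $\Gamma \arr \spec L$ be the band of $\cG$; it is a form of $G$, abelian, of multiplicative type. First I make precise the relationship between line bundles on $\cG$ and characters of $\Gamma$. If $\cL \arr \cG$ is a line bundle, then for any object $\xi \in \cG(A)$ the automorphism group scheme $\underaut_{A}\xi = \Gamma_{A}$ acts on the fiber $\xi^{*}\cL$ through a character. Since $\Gamma$ is abelian, these characters glue to a homomorphism $\lambda_{\cL}\colon \Gamma \arr \gm$ defined over $L$. After base change to $K$ and composition with $\alpha$, $\lambda_{\cL}$ becomes the character of $G$ associated with $\cL$. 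Equivalently, the morphism $\cG \arr \cB_{L}\gm$ defined by $\cL$ is a morphism of gerbes inducing $\lambda_{\cL}$ on bands.

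Next, I pick line bundles $\cL_{1}, \dots, \cL_{r}$ on $\cG$ whose characters $\chi_{1}, \dots, \chi_{r}$ generate $\widehat{G}$. These give a morphism $f\colon \cG \arr \cB_{L}\gm^{r}$ inducing on bands the homomorphism $\lambda = (\lambda_{\cL_{1}}, \dots, \lambda_{\cL_{r}})\colon \Gamma \arr \gm^{r}$, defined over $L$. After base change to $K$ this is the homomorphism $G \arr \gm^{r}$ given by $(\chi_{1}, \dots, \chi_{r})$. Because the $\chi_{i}$ generate $\widehat{G}$, the induced map $\ZZ^{r} \arr \widehat{G}$ is surjective, so $G \arr \gm^{r}$ is a closed embedding. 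Since being a closed immersion descends along field extensions, $\lambda$ is a closed embedding over $L$. (In particular $\Gamma$ is diagonalizable over $L$, although this is not needed.) Set $T \eqdef \gm^{r}/\Gamma$. Its character group is the kernel of $\ZZ^{r} \arr \widehat{G}$, with trivial Galois action since all the maps are defined over $L$, so $T$ is a split torus over $L$.

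Now I use nonabelian (here abelian) fppf cohomology. The short exact sequence $1 \arr \Gamma \arr \gm^{r} \arr T \arr 1$ gives an exact sequence
\[
\H^{1}(L, T) \arr \H^{2}(L, \Gamma) \arr \H^{2}(L, \gm^{r}).
\]
The image of $[\cG]$ in $\H^{2}(L,\gm^{r})$ is the class of the gerbe obtained by pushing $\cG$ along $\lambda$. The existence of the morphism of gerbes $f\colon \cG \arr \cB_{L}\gm^{r}$ compatible with $\lambda$ says precisely that this pushforward is the neutral gerbe $\cB_{L}\gm^{r}$. Hence $[\cG]$ comes from $\H^{1}(L,T)$, which vanishes by Hilbert~90 because $T$ is a split torus. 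Therefore $[\cG] = 0$, i.e.\ $\cG$ is neutral.

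A more geometric way to phrase the same argument is this. The trivial torsor $\spec L \arr \cB_{L}\gm^{r}$ pulls back along $f$, which is representable because $\lambda$ is injective, to an algebraic space $X$ with a map $X \arr \cG$. One checks that $X$ is a $T$-torsor over $L$, hence has a rational point, which maps to an object of $\cG(L)$.

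The main obstacle I expect is the first step: carefully justifying that line bundles on the gerbe give $L$-rational characters of the band, compatible with the identification via $\alpha$. This includes the functoriality statement that a morphism of gerbes over a band homomorphism sends $[\cG]$ to the pushed-forward class. I would cite Giraud for the functoriality of $\H^{2}$ with respect to morphisms of bands, or else use the geometric version, which sidesteps it. Note that the $p$-group hypothesis is not really used; the argument works for any finite diagonalizable $G$.
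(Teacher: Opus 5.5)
Your proof is correct and is essentially the paper's argument: the line bundles give a $\lambda$-morphism $\cG \arr \cB_{L}\gm^{r}$ with $\lambda\colon \Gamma \hookrightarrow \gm^{r}$, so $[\cG]$ dies in $\H^{2}(L,\gm^{r})$, and Hilbert~90 makes $\H^{2}(L,\Gamma)\arr \H^{2}(L,\gm^{r})$ injective. The only difference is how injectivity is obtained. The paper takes the $\chi_{i}$ to form a basis, so that $\Gamma \simeq \mmu_{p^{d_{1}}}\times\dots\times\mmu_{p^{d_{r}}}$ sits coordinatewise in $\gm^{r}$, and applies Kummer theory factor by factor; your split torus $\gm^{r}/\Gamma$ instead handles an arbitrary generating set. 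In particular, the paper's ``minimal set of generators'' really has to be a basis, which is available because the descended characters form a subgroup; your version does not need this.
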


\begin{proof}
Choose a sequence $\chi_{1}$, \dots,~$\chi_{r}$ of characters of $G$ that descend to $\cG$, that forms a minimal set of generators of $\widehat{G}$. Let the order of $\chi_{i}$ be $p^{d_{i}}$, so that $\widehat{G} = (\ZZ/p^{d_{1}}\ZZ) \oplus \dots \oplus (\ZZ/p^{d_{r}}\ZZ)$. Let $\cL_{1}$, \dots,~$\cL_{r}$ be a sequence of line bundles on $\cG$, such that the pullback of $\cL_{i}$ to $\cB_{K}G \simeq \cG_{E}$ corresponds to $\chi_{i}$. 

Let $\cG \arr \cB_{L}\gm^{r}$ be the morphism obtained from the sequence $\cL_{1}$, \dots,~$\cL_{r}$. If $\Gamma$ is the band of $\cG$, the resulting homomorphism of commutative group schemes $\Gamma \arr \gm^{r}$ is injective, and the class of $\cG$ in $\H^{2}(L, \Gamma)$ maps to the class of $\cB_{L}\gm^{r}$, which is $0$, in $H^{2}(L, \gm^{r})$.

The homomorphism $\Gamma \arr \gm^{r}$ gives an isomorphism of $\Gamma$ with $\mmu_{p^{d_{1}}}\times \dots \times \mmu_{p^{d_{r}}} \subseteq \gm^{r}$, and $\H^{2}(L, \mu_{p^{d_{i}}}) \to \H^{2}(L, \gm)$ is injective by Hilbert's theorem 90. It follows that $\H^{2}(L, \Gamma) \to \H^{2}(L, \gm^{r}) = \H^{2}(L, \gm)^{r}$ is injective as well, hence the class of $\cG$ in $\H^{2}(L, \Gamma)$ is $0$ and $\cG$ is neutral.

\end{proof}

This ends the proof of \Cref{generalcriterion}.
\end{proof}

\begin{proof}[Proof of \Cref{splitting}]
As $G$ is diagonalizable, it comes from a unique diagonalizable group scheme over the prime field $k$, which we call again $G$; similarly, $V$ comes from a unique representation of $G$ over $k$, which we denote again by $V$.

Let us define two set-valued functors $\Gamma$ and $\Delta$ on $\aff[L]$ as follows. Let $n$ be the order of $\omega$. Let $S$ be an affine scheme over $L$.
\begin{enumerate1}

\item $\Gamma(S)$ is the set of subbundles $\cW \subseteq \cV_{S}$, with the property that there exists an fppf cover $\{S_{\alpha} \arr S\}$, an equivalence of $\cV_{S_{\alpha}} \simeq [V/G]_{S_{\alpha}}$, sending $\cW_{S_{\alpha}} \subseteq \cV_{S_{\alpha}}$ into $([V_{\chi}/G])_{S_{\alpha}}$ for some $\chi \in \omega$ (this $\chi$ might depend on $\alpha$).

\item $\Delta(S)$ is the set of sequences $\cW_{1}$, \dots,~$\cW_{n}$ of subbundles $\cW_{i} \subseteq \cV_{S}$, with the property that there exists an fppf cover $\{S_{\alpha} \arr S\}$, an equivalence of $\cV_{S_{\alpha}} \simeq [V/G]_{S_{\alpha}}$, sending the $(\cW_{i})_{S_{\alpha}} \subseteq \cV_{S_{\alpha}}$ into the set $([V_{\chi}])_{S_{\alpha}}$ for $\chi \in \omega$, with some ordering.

\end{enumerate1}

It is easy to see that $\Gamma$ is a finite étale cover of $\spec L$ of degree $n$; while $\Delta$ is a finite Galois cover with group $\rS_{n}$, where $\rS_{n}$ acts on $\Delta$ by permuting the $\cW_{i}$ above. If we take $E$ to be an extension of $L$ such that $\spec E$ is a connected component of $\Gamma$ in case (1), and $\Delta$ in case (2), we see that in both cases $E$ has the desired property.
\end{proof}

\section{Geometric applications}\label{applications}

We will refer to \cite{giulio-angelo-arithmetic} for our general setup on fields of moduli and residual gerbes. Assume that we are given a locally finitely presented fppf stack $\cM \arr \aff$. If $\xi$ is a an object in $\cM(\overline{k})$, we can define its residual gerbe $\cG_{\xi}$, is a subcategory of $\cM$ which is an fppf gerbe over an algebraic extension $\ell$ of $k$. Let us assume that $\cG_{\xi}$ is a finite  over $\ell$; if $k$ is perfect, by \cite[Proposition~3.9]{giulio-angelo-valuative}, this is equivalent to asking that the automorphism group scheme $G \eqdef \underaut_{\overline{k}}\xi$ is a finite group scheme.

Assume that for some $n$ we are given a cartesian functor $F\colon \cG_{\xi} \arr \cB_{k}\GL_{n}$, which associates with every object $\eta$ of some $\cG_{\xi}(S)$ a vector bundle $F(\eta)$ of rank~$n$ over $S$; this gives a vector bundle $\cV$ over $\cG_{\xi}$. We obtain a finite-dimensional vector space $V \eqdef F(\xi)$ over $\overline{k}$. By functoriality, the group $G$ acts linearly on $V$. By definition $(\cG, \cV)$ is equivalent to $(\cB_{\overline{k}}G, V)$. 

So, if the representation $G \arr \GL_{\overline{k}}V$ is neutral, the residual gerbe $\cG_{\xi}$ is neutral; in other words, $\xi$ is defined over its field of moduli.

For example, assume that $\cM$ is the stack of proper algebraic spaces; in other words, for each affine scheme $S$ over $k$, $\cM(S)$ is the category of proper finitely presented algebraic spaces over $S$. If $X \arr \spec \overline{k}$ is a proper algebraic space with residual gerbe $\cG_{X} \arr \spec \ell$ and $\pi\colon Y \arr S$ is a twisted form of $X$ defined over an affine scheme $S$, the sheaf $\rR^{i}\pi_{*}\Omega_{Y/S}$ is locally free, and its formation commutes with base change. We get a vector bundle $\cV$ on $\cG_{X}$; it is equivalent to the vector space $\H^{i}(X, \Omega_{X/\overline{k}})$, with the natural action of $G \eqdef \underaut_{\overline{k}}X$. 

We can will also consider the category $\cM$ of pointed spaces; an object $(X, x_{0})$ is a proper finitely presented algebraic space over $S$, and $x_{0}\colon S \arr X$ is a section of $X \arr S$. If $(X, x_{0})$ is an object of $\cM(S)$, we can consider the normal bundle $N_{S}X$. This is defined as the dual vector bundle to the locally free sheaf $\rI_{S}X/(\rI_{S}X)^{2}$ on $S$, where $\rI_{S}X$ is the sheaf of ideals of the closed embedding $x_{0}\colon S \arr S$. The corresponding representation of $ \underaut_{\overline{k}}(X, x_{0})$ is its natural action on the tangent space $\rT_{X,x_{0}} \eqdef (\frm_{x_{0}}/\frm_{x_{0}}^{2})^{\vee}$.

We will only give two simple examples of applications of this principle; much more general, and more elaborate, statements are of course possible. We will limit ourselves to the case of objects with cyclic automorphism groups, in which we can apply \Cref{easy-cyclic}; already in this simple case the statements are much more general than those present in the literature.

\begin{theorem}\label{marked-cyclic}
Let $X$ be a proper algebraic variety over $\overline{k}$, and let $x_{0} \in X(\overline{k})$ a smooth point. Assume that 
   \[
   G \eqdef \underaut_{\overline{k}}(X, x_{0}) = \mmu_{n}
   \]
for some positive integer $n$. For each prime divisor $p$ of $n$ denote by $H_{p} \subseteq G$ the only subgroup scheme that is isomorphic to $\mmu_{p}$. Assume that for each prime divisor $p$ of $n$ we have that $\dim_{x_{0}}X - \dim_{x_{0}}X^{H_{p}}$ is not divisible by $p$. Then the pointed variety $(X,x_{0})$ is defined over its field of moduli.
\end{theorem}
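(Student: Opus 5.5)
We will reduce \Cref{marked-cyclic} to \Cref{easy-cyclic}, using the principle described at the start of this section. Take $\cM$ to be the stack of pointed proper algebraic spaces. Set $\xi \eqdef (X, x_{0})$ and let $\cG_{\xi} \arr \spec \ell$ be its residual gerbe. Since $G = \mmu_{n}$ is finite, $\cG_{\xi}$ is a finite gerbe. Because $x_{0}$ is a smooth point, every twisted form $(Y, y_{0})$ over an affine $S$ has $y_{0}$ in the smooth locus of $Y \arr S$, fppf locally on $S$. Hence $\rI_{S}Y/(\rI_{S}Y)^{2}$ is locally free of rank $d \eqdef \dim_{x_{0}}X$, and its formation commutes with base change. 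The normal bundle construction therefore defines a cartesian functor $\cG_{\xi} \arr \cB_{k}\GL_{d}$, giving a vector bundle $\cV \arr \cG_{\xi}$. The pair $(\cG_{\xi}, \cV)$ is a model of $(\cB_{\overline{k}}G, V)$ with $V \eqdef \rT_{X,x_{0}}$, carrying the natural linear action of $G$. So it is enough to show that the representation $G \arr \GL_{\overline{k}}(V)$ is neutral.

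By \Cref{easy-cyclic} applied to the cyclic group scheme $G = \mmu_{n}$, it suffices to check that $\dim V - \dim V^{H_{p}}$ is prime to $p$ for each prime $p \mid n$. We have $\dim V = \dim_{x_{0}}X$ because $x_{0}$ is smooth. The remaining step, and the main point of the proof, is the identification
\[
\dim V^{H_{p}} = \dim_{x_{0}} X^{H_{p}},
\]
that is, that the tangent space at $x_{0}$ of the fixed point scheme $X^{H_{p}}$ is $(\rT_{X,x_{0}})^{H_{p}}$, and that $X^{H_{p}}$ is smooth at $x_{0}$.

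To prove this, we pass to the complete local ring $\widehat{\cO}_{X,x_{0}} \simeq \overline{k}[[t_{1},\dots,t_{d}]]$. The action of the diagonalizable group $H_{p} \simeq \mmu_{p}$ on this ring is a $\ZZ/p$-grading, and linear reductivity holds even when $p = \cha k$. Hence the surjection $\frm \arr \frm/\frm^{2}$ admits an equivariant section, and we may choose the coordinates $t_{i}$ to be $H_{p}$-eigenvectors, lifting an eigenbasis of $\frm/\frm^{2}$. This is the analogue of Cartan's linearization. In these coordinates, the ideal of the fixed locus is generated by the coordinates of nontrivial weight. Indeed, $X^{H_{p}}$ represents points where the coaction is trivial, so its ideal is generated by the homogeneous elements of nonzero degree, which is the ideal generated by the nontrivial-weight coordinates. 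The completed local ring of $X^{H_{p}}$ at $x_{0}$ is therefore a power series ring in the weight-zero coordinates, whose number is $\dim V^{H_{p}}$. This gives smoothness and the required dimension equality.

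With this identification, the hypothesis of \Cref{marked-cyclic} is exactly the hypothesis of \Cref{easy-cyclic}. So $V$ is neutral, $\cG_{\xi}$ has an $\ell$-point, and $(X, x_{0})$ is defined over its field of moduli $\ell$. The step needing the most care is the equivariant choice of coordinates for the non-reduced group $\mmu_{p}$ when $p = \cha k$. We handle it through the grading description, since $\mmu_{p}$-actions on an affine formal scheme are the same as $\ZZ/p$-gradings.
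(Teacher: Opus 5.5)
Your proposal is correct and follows the paper's proof. You realize $\rT_{X,x_{0}}$ as a $V$-form on the residual gerbe via the normal bundle, identify $\dim_{x_{0}}X^{H_{p}}$ with $\dim V^{H_{p}}$ using linear reductivity of $H_{p}$ (so $X^{H_{p}}$ is smooth at $x_{0}$ with tangent space $V^{H_{p}}$), and conclude by \Cref{easy-cyclic}; your equivariant-coordinates argument in $\widehat{\cO}_{X,x_{0}}$ just spells out the step the paper states in one line.
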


\begin{proof}
Call $V$ the tangent space to $X$ in $x_{0}$; then $G$ acts linearly on $V$. Let $H \subseteq G$ be a subgroup scheme of $G$; since $H$ is linearly reductive, we have that $X^{H}$ is smooth at $x_{0}$, and the tangent space to $X^{H}$ in $x_{0}$ is $V^{H}$. Hence the statement follows from \Cref{easy-cyclic}.\end{proof}

\begin{theorem}\label{curve-cyclic}
Let $X$ be a smooth projective curve over $\overline{k}$ of genus at least $2$, such that $G \eqdef \aut_{\overline{k}}X$ is cyclic, of order prime to $\cha k$. For each prime divisor $p$ of $|G|$ we denote by $H_{p} \subseteq G$ the only subgroup of order $p$. Assume that for every such $p$ the difference between the genus of $X$ and the genus of $X/H_{p}$ is not divisible by $p$. Then $X$ is defined over its field of moduli.
\end{theorem}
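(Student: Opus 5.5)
Following the general principle of this section, we take $\cM$ to be the stack of smooth proper curves and $\xi = X$, and we choose a vector bundle $\cV$ on the residual gerbe $\cG_{X}$ whose fibre at $X$ is a cohomology group on which \Cref{easy-cyclic} can be checked. The natural candidate is $V \eqdef \H^{0}(X, \omega_{X}) = \H^{0}(X,\Omega_{X/\overline{k}})$, of dimension $g = g(X)$. Alternatively one can use $\H^{1}(X,\cO_{X})$, which is dual to it; the fixed‑point dimensions agree. If $\pi\colon Y \arr S$ is a twisted form of $X$, then $\pi_{*}\Omega_{Y/S}$ is locally free of rank $g$ and its formation commutes with base change. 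Hence we get a $V$-form $\cV \arr \cG_{X}$, with $G = \aut X$ acting naturally on $V$.

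Since $g \ge 2$ and $|G|$ is prime to $\cha k$, the group scheme $\underaut_{\overline{k}}X$ is the constant finite group $G$, which is cyclic of order $n$ prime to the characteristic. So $G \simeq \mmu_{n}$ over $\overline{k}$, its subgroup of order $p$ is $H_{p}$, and the residual gerbe is finite. By \Cref{easy-cyclic} it therefore suffices to check, for every prime $p \mid n$, that
\[
\dim V - \dim V^{H_{p}} = g(X) - \dim \H^{0}(X,\omega_{X})^{H_{p}}
\]
is not divisible by $p$. Neutrality of $V$ then gives that $\cG_{X}$ is neutral, that is, $X$ is defined over its field of moduli.

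The key step, and the only real content, is the identification $\dim \H^{0}(X,\omega_{X})^{H} = g(X/H)$ for a finite group $H$ of order prime to the characteristic. Let $f\colon X \arr Y \eqdef X/H$ be the quotient map. Since $|H|$ is invertible, taking invariants is exact, so
\[
\H^{1}(X,\cO_{X})^{H} = \H^{1}(Y, (f_{*}\cO_{X})^{H}) = \H^{1}(Y,\cO_{Y}).
\]
Here we use $(f_{*}\cO_{X})^{H} = \cO_{Y}$ and the fact that $f$ is finite, so cohomology of $f_{*}$ agrees with cohomology on $X$. Thus $\dim \H^{1}(X,\cO_{X})^{H} = g(Y)$. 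By Serre duality, $\H^{0}(X,\omega_{X})$ is the $G$-equivariant dual of $\H^{1}(X,\cO_{X})$, and for a linearly reductive group the invariants of the dual have the same dimension as the invariants of the representation. So $\dim V^{H_{p}} = g(X/H_{p})$, and the hypothesis of the theorem is exactly the hypothesis of \Cref{easy-cyclic}.

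I expect the main obstacle to be minor technical bookkeeping rather than a conceptual difficulty. One point is that the residual gerbe is finite, which follows because $\aut X$ is finite and étale. The other is matching the notation $\mmu_{n}$ with the cyclic group $G$ via a choice of a primitive root of unity in $\overline{k}$, which is possible since $n$ is prime to $\cha k$.
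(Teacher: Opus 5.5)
Your proposal is correct and follows the paper's proof: you take the vector bundle on the residual gerbe given by $V = \H^{0}(X,\Omega_{X/\overline{k}})$ and apply \Cref{easy-cyclic}, using $g(X/H_{p}) = \dim V^{H_{p}}$. The only difference is that you justify this last identity through $\H^{1}(X,\cO_{X})^{H} = \H^{1}(X/H,\cO_{X/H})$ and equivariant Serre duality, whereas the paper simply asserts it by identifying $H$-invariant differentials on $X$ with differentials on $X/H$.
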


\begin{proof}
It is enough to prove that the representation $H^{0}(X, \Omega_{X/\overline{k}})$ of $G$ is neutral. Since the genus of $X$ equals $\dim_{\overline{k}}H^{0}(X, \Omega_{X/\overline{k}})$, while the genus of $X/H_{p}$ equals $\dim_{\overline{k}}H^{0}(X/H_{p}, \Omega_{(X/H_{p})/\overline{k}}) = \dim_{\overline{k}}H^{0}(X, \Omega_{X/\overline{k}})^{H_{p}}$, the statement follows from \Cref{easy-cyclic}.
\end{proof}

\bibliographystyle{amsalpha}
\bibliography{main}

\end{document}